\newtheorem{theorem}{Theorem}[section]
\newtheorem{lemma}[theorem]{Lemma}
\newtheorem{remark}[theorem]{Remark}
\title{Generalized multiscale finite element methods for space-time
heterogeneous parabolic equations}
\author{Eric T. Chung\thanks{Department of Mathematics, The Chinese University of Hong Kong, Hong Kong SAR.
This research is partially supported by the Hong Kong RGC General Research Fund (Project number: 400411).}, \and
Yalchin Efendiev\thanks{Department of Mathematics, Texas A\&M University, College Station, TX; Numerical Porous Media SRI Center, King Abdullah University of Science and Technology (KAUST), Thuwal 23955-6900, Kingdom of Saudi Arabia},\and
Wing Tat Leung\thanks{Department of Mathematics, Texas A\&M University, College Station, TX.},\and
 Shuai Ye\thanks{Department of Mathematics, Texas A\&M University, College Station, TX.}
}
\date{}
\begin{document}

\maketitle

\begin{abstract}

In this paper, we consider local multiscale model reduction for
problems with multiple scales in space and time. We developed our approaches within the framework of the Generalized Multiscale Finite Element Method (GMsFEM) using space-time coarse cells. The main idea of GMsFEM is to construct a local snapshot space and a local spectral decomposition in the snapshot space. Previous research in developing multiscale spaces within GMsFEM focused on constructing multiscale spaces and relevant ingredients in space only. In this paper, our main objective is to develop a multiscale
model reduction framework within GMsFEM that uses space-time coarse
cells. We construct space-time snapshot and offline spaces.
We compute these snapshot solutions by solving local problems. A complete snapshot space will use all possible
boundary conditions; however, this can be very expensive. We
propose using randomized boundary conditions and oversampling
(cf. \cite{randomized2014}). We construct the local spectral decomposition
based on our analysis, as presented in the paper.
We present numerical results to confirm our theoretical findings and
to show that using our proposed approaches, we can obtain an accurate
solution with low dimensional coarse spaces. We discuss using online basis functions constructed in the online stage
and using the residual information. Online basis functions use
global information via the residual
and provide fast convergence to the exact solution
provided a sufficient number of offline basis functions.
We present numerical studies for our proposed online procedures.
We remark that the proposed method is
a significant extension compared to
existing methods, which use coarse cells in space only because
of (1) the parabolic nature of cell solutions, (2) extra degrees of freedom
associated with space-time cells, and (3) local boundary conditions in space-time cells.

\end{abstract}

\section{Introduction}



Many multiscale processes vary over multiple space and time scales.
These space and time scales are often tightly coupled. For example,
flow processes in porous media
 can occur on multiple time scales over multiple spatial scales.
Moreover, these scales can be non-separable.
Reduced-order models for these problems require simultaneously treating
spatial and temporal scales. Many previous approaches only handle spatial scales and spatial heterogeneities. These approaches
have limitations when temporal heterogeneities arise.
In this paper, we discuss a class of multiscale methods for handling
space and time scales.

Some well-known approaches for handling {\it separable}
spatial and temporal scales
are homogenization techniques \cite{jikov2012homogenization,pankov2013g,pavliotis2008multiscale,efendiev2005homogenization}. In these methods, one solves local problems
in space and time. To give an example, we consider
a well-known case of the parabolic equation
\begin{equation}
\label{eq:parabolic_epsilon}
\begin{split}
\frac{\partial}{\partial t}u-\text{div}(\kappa(x,x/\epsilon^\alpha,t,t/\epsilon^\beta)\nabla u)  = f,
\end{split}
\end{equation}
subject to smooth initial and boundary conditions.  
Here, $\epsilon$ is a small scale, and the spatial scale is $\epsilon^\alpha$, and
the temporal scale is $\epsilon^\beta$.
One can show
that (e.g., \cite{jikov2012homogenization,pankov2013g}),
the homogenized equation has the same form
as (\ref{eq:parabolic_epsilon}), but with the smooth coefficients
$\kappa^*(x,t)$. One can compute the coefficients using the solutions of local space-time parabolic equations in the  periodic cell.
This localization is possible thanks to the scale separation.
The local problems may or may not include time-dependent
derivatives depending on the interplay between $\alpha$ and $\beta$
since the cell problems are independent of $\epsilon$.
One can extend this homogenization procedure to numerical homogenization
type methods
\cite{ming2007analysis,abdulle2014finite,efendiev2004numerical, fish2004space}, where
one solves the local parabolic equations in each coarse block and
in each coarse time step. To compute the effective property, 
one averages the solutions of the local problems.
These approaches work well in the scale separation cases, but do not
provide accurate approximations when there is no scale separation.

Previous researchers developed a number of multiscale methods for solving
space-time multiscale problems in the absence of scale separation.
These approaches use Multiscale Finite Element Methods \cite{hw97,eh09, kunze2012adaptive, efendiev2004numerical},
where one computes multiscale space-time basis functions, variational multiscale
methods \cite{hughes1996space, hughes1996space_1}, and other approaches 
\cite{takizawa2011multiscale, tezduyar1992computation,nguyen1984space,masud1997space}
that are 
developed for stabilization.
In \cite{owhadi2007homogenization}, Owhadi and
Zhang proposed a novel approach that uses global space-time information
in computing multiscale basis functions. All these approaches use only a limited
number of basis functions (one basis function)
in each coarse block. 
We note that there has been a large body of works in space-time finite element methods. 
In this paper, our objective is to develop a
general approach that can systematically construct multiscale basis functions, and provide
analysis for multiscale high-contrast problems.

Our approaches use the Generalized Multiscale Finite Element Method (GMsFEM)
Framework and develop
a systematic approach for identifying multiscale basis functions.
The GMsFEM is a generalization of MsFEM, proposed by Hou and Wu \cite{hw97}.
The main idea of the GMsFEM is to construct multiscale basis functions
by constructing snapshots spaces and performing local spectral decomposition
in the snapshot spaces \cite{ egh12, Chung_adaptive14, chan2015adaptive, Efen_GVass_11, ce09,chung2015generalizedperforated, Efen_GVass_11, Efendiev_GLW_ESAIM_12, Efen_GVass_11,chung2013sub,chung2015residual,chung2014generalized,chung2015online, bush2014application}. The choice of the snapshot spaces and
the local spectral decomposition is important for converging the
resulting approach. We choose the snapshot spaces such that it can approximate
the local solution space, while typically deriving local spectral decomposition from the analysis.

Previous approaches in developing multiscale
spaces within GMsFEM focused
on constructing
 multiscale spaces and relevant ingredients in space only.
The proposed method is
a significant extension compared to
existing methods, which use coarse cells in space only because
of (1) the parabolic nature of cell solutions, (2) extra degrees of freedom
associated with space-time cells,
and (3) local boundary conditions in space-time cells.
In our approach, we construct snapshot spaces in space-time
local domains. We construct the snapshot solutions
by solving local problems.
We can construct a complete snapshot space by taking all
possible boundary conditions; however, this can result to very high
computational cost. For this reason, we
use randomized boundary conditions for local snapshot vectors by
solving
parabolic equations subject to random boundary and initial conditions.
We compute only a few more than the number of basis functions
needed. Computing multiscale basis functions employs local
spectral problems. These local spectral problems are in space-time domain.
Using space-time eigenvalue problems controls
the errors associated with $\partial u /\partial t$. We discuss
several choices for local spectral problems and present a convergence
analysis of the method.

In the paper, we present several numerical examples. We consider
the numerical tests with the conductivities that contain high contrast
and these high
conductivity regions move in time. These are  challenging examples
since the high-conductivity heterogeneities vary significantly during
one coarse-grid time interval. If
only using spatial multiscale basis functions, one will need a very large dimensional coarse space.
In our numerical
results, we use oversampling and randomized snapshots. Our results
show that one can achieve a small error by selecting a few multiscale
basis functions. The numerical results confirm our convergence analysis.

In the paper, we also discuss online multiscale basis functions.
In \cite{chung2015residual,chung2015online}, we present an online
procedure for {\it time-independent} problems.
The main idea of online multiscale basis functions is to use the residual
information and construct new multiscale basis functions adaptively.
We would like to choose a number of offline basis functions such that with
only 1-2 online iterations, we can substantially reduce the error.
This requires a sufficient number of online basis functions, with the online basis function construction typically derived by the analysis.
In this paper, we present a possible online construction and
show numerical results. Based on our previous results for
{\it time-independent} problems, we show that one needs a sufficient
number of offline basis functions to reduce the error substantially.
In our numerical results, we observe a similar phenomena, i.e.,
the error decreases rapidly in 1-2 online iterations.
We plan to investigate the convergence of the online procedure in our future work.

We organize the paper as follow. In Section \ref{sect:GMsFEM}, we present the underlying problem, the concepts of coarse and fine grids, the motivation of space-time approach, and the space-time GMsFEM framework. In Section \ref{sect:analysis}, we present the convergence analysis for our proposed method. In Section \ref{sect:online}, we present the new enrichment procedure of computing online multiscale basis functions. We present numerical results for offline GMsFEM and online GMsFEM in Section \ref{sect:NR1} and Section \ref{sect:NR2}, separately. In Section \ref{sect:conclusion}, we draw conclusions.

\section{Space-time GMsFEM}\label{sect:GMsFEM}

\subsection{Preliminaries and motivation}

Let $\Omega$ be a bounded domain in $\mathbb{R}^2$ with a Lipschitz boundary $\partial \Omega$, and $[0,T]$ $(T>0)$ be a time interval. In this paper, we consider the following parabolic differential equation
\begin{equation}\label{eq:para PDE}
\begin{split}
\frac{\partial}{\partial t}u-\text{div}(\kappa(x,t)\nabla u) & = f \quad\qquad \text{in }\Omega\times(0,T), \\
u & = 0 \quad\qquad \text{on }\partial\Omega\times(0,T),\\
u(x,0) & = \beta(x) \quad\;\; \text{in }\Omega,
\end{split}
\end{equation}
where $\kappa(x,t)$ is a time dependent heterogeneous media
(for example, a time dependent high-contrast permeability field),
$f$ is a given source function, $\beta(x)$ is the initial condition.
Our main objective is to develop
space-time multiscale model reduction within GMsFEM and we use
the time-dependent parabolic equation as an example.
The proposed methods can be used for other models that require
space-time multiscale model reduction.

We will introduce the space-time generalized multiscale finite element method in this section. The method follows the space-time finite element framework, where the time dependent multiscale basis functions are constructed on the coarse grid. Therefore, compared with the time independent basis structure, it gives a more efficient numerical solver for the parabolic problem in complicated media.

Before introducing our method, we need to define the mesh of the domain first. Let $\mathcal{T}^{h}$ be a partition of the domain $\Omega$ into fine finite elements where $h>0$ is the fine mesh size. Then we form a coarse partition $\mathcal{T}^{H}$ of the domain $\Omega$ such that every element in $\mathcal{T}^{H}$ is a union of connected fine-mesh
grid blocks, that is, $\forall K_j\in\mathcal{T}^{H}$, $K_j=\cup_{F\in I_j}F$ for some $I_j\subset\mathcal{T}^{h}$. The set $\mathcal{T}^{H}$ is called the coarse grid and the elements of $\mathcal{T}^{H}$ are called coarse elements. Moreover, $H>0$ is the coarse mesh size. In this paper, we consider rectangular coarse elements for the ease of discussions and illustrations. The methodology presented can be easily extended to coarse elements with more general geometries. Let $\{x_i\}_{i=1}^{N_c}$ be the set of nodes in the coarse grid $\mathcal{T}^{H}$ (or coarse nodes for short), where $N_c$ is the number of coarse nodes. We denote the neighborhood of the node $x_i$ by
\[
\omega_i = \bigcup\{K_j\in\mathcal{T}^{H}: x_i\in\overline{K_j} \}.
\]
Notice that $\omega_i$ is the union of all coarse elements $K_j\in\mathcal{T}^{H}$ sharing the coarse node $x_i$. An illustration of the above definition is shown in Figure \ref{Fig:plotnhood}. Next, let $\mathcal{T}^{T}=\{(T_{n-1},T_{n})|1\leq n\leq N\}$
be a coarse partition of $(0,T)$ where
\[
0=T_{0}<T_{1}<T_{2}<\cdots<T_{N}=T
\]
and we define a fine partition of $(0,T)$, $\mathcal{T}^{t}$ by refining
the partition $\mathcal{T}^{T}$.

\begin{figure}[H]
\begin{minipage}[t]{0.45\linewidth}
\centering
\includegraphics[width=\columnwidth]{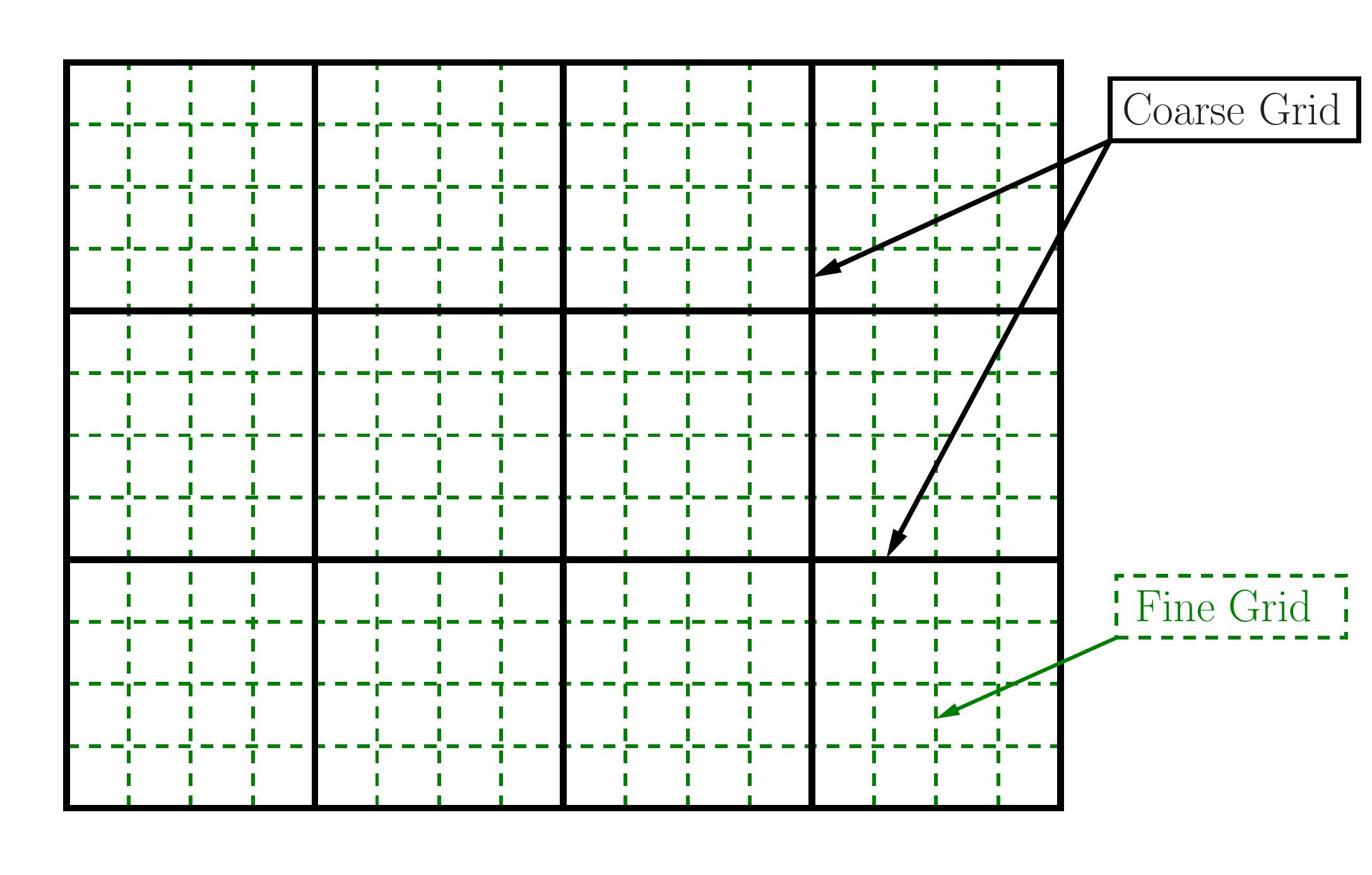}
\end{minipage}
\begin{minipage}[t]{0.45\linewidth}
\centering
\includegraphics[width=\columnwidth]{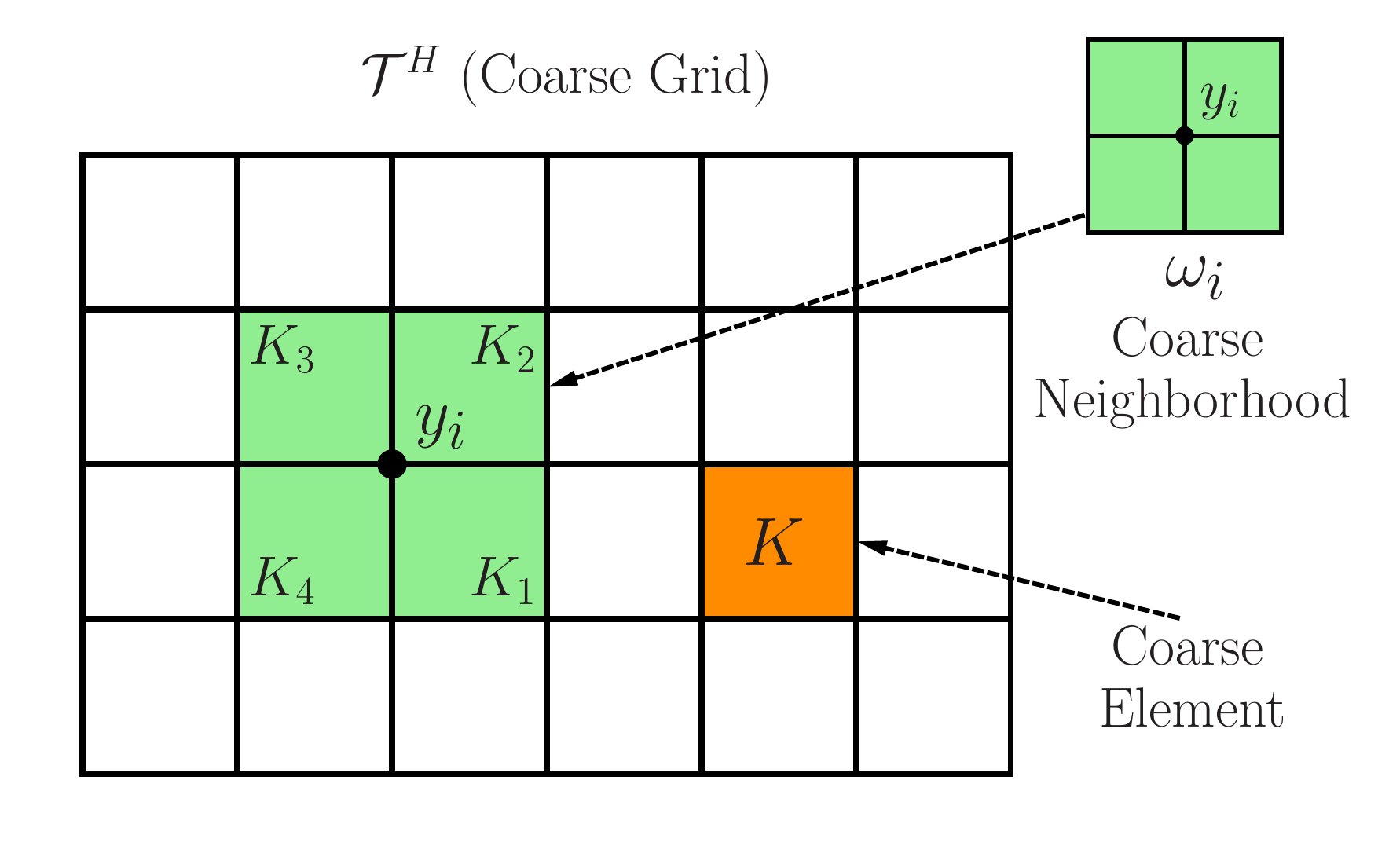}
\end{minipage}
\caption{Left: an illustration of fine and coarse grids. Right: an illustration of a coarse neighborhood and a coarse element.}
\label{Fig:plotnhood}
\end{figure}

To fix the notations, we will use the standard conforming piecewise linear finite element method for the computation of the fine-scale solution.
One can use discontinuous Galerkin coupling also \cite{eglmsMSDG,cel14,eric-2012}.
Specifically, we define the finite element space $V_h$ with respect to $\mathcal{T}^{h}\times(0,T)$ as
\begin{eqnarray*}
V_{h} &=& \{v\in L^{2}((0,T);C^{0}(\Omega))\; | \; v=\phi(x)\psi(t) \text{ where }\phi|_{K}\in Q_{1}(K)\;\forall K\in\mathcal{T}^{h}, \; \psi|_{\tau}\in C^{0}(\tau)\;\forall \tau\in\mathcal{T}^{T} \\
& & \text{ and }\psi|_{\tau}\in P_{1}(\tau)\;\forall \tau\in\mathcal{T}^{t}\},
\end{eqnarray*}
then the fine-scale solution $u_h \in V_h$ is obtained by solving the following variational problem
\begin{equation}\label{eq:fine problem}
\int_{0}^{T}\int_{\Omega}\cfrac{\partial u_{h}}{\partial t}v+\int_{0}^{T}\int_{\Omega}\kappa\nabla u_{h}\cdot\nabla v+\sum_{n=0}^{N-1}\int_{\Omega}[u_{h}(x,T_{n})]v(x,T_{n}^{+})
=\int_{0}^{T}\int_{\Omega}fv+\int_{\Omega}\beta(x)v(x,T_{0}^{+}),\;\forall v\in V_{h},
\end{equation}
where $[\cdot]$ is the jump operator such that
\[
\begin{cases}
[u_{h}(x,T_{n})]=u_{h}(x,T_{n}^{+})-u_{h}(x,T_{n}^{-}) & \text{ for }n\geq1,\\
{}[u_{h}(x,T_{n})]=u_{h}(x,T_{0}^{+}) & \text{ for }n=0.
\end{cases}
\]
We assume that the fine mesh size $h$ is small enough so that the fine-scale solution $u_h$ is close enough to the exact solution. The purpose of this paper is to find a multiscale solution $u_{H}$ that is a good approximation of the fine-scale solution $u_h$.

Now we present the general idea of GMsFEM.
We will use the space-time finite element method to solve problem (\ref{eq:para PDE}) on the coarse grid. That is, we find $u_{H}\in V_{H}$ such that
\begin{equation}\label{eq: space-time FEM}
\int_{0}^{T}\int_{\Omega}\cfrac{\partial u_{H}}{\partial t}v+\int_{0}^{T}\int_{\Omega}\kappa\nabla u_{H}\cdot\nabla v+\sum_{n=0}^{N-1}\int_{\Omega}[u_{H}(x,T_{n})]v(x,T_{n}^{+})
=\int_{0}^{T}\int_{\Omega}fv+\int_{\Omega}\beta(x)v(x,T_{0}^{+}),\;\forall v\in V_{H},
\end{equation}
where $V_{H}$ is the multiscale finite element space which will be introduced in the following subsections.

The computational cost for solving the equation (\ref{eq: space-time FEM})
is huge since we need to compute the solution $u_{H}$ in the whole
time interval $(0,T)$ at one time. In fact, if we assume the solution
space $V_{H}$ is a direct sum of the spaces only containing the functions
defined on one single coarse
time interval $(T_{n-1},T_{n})$, we can decompose
the problem (\ref{eq: space-time FEM}) into a sequence of problems
and find the solution $u_{H}$ in each time interval sequentially.
Our coarse space will be constructed in each time interval and
we will have
\[
V_{H}=\oplus_{n=1}^{N}V_{H}^{(n)},
\]
 where $V_{H}^{(n)}$ only contains the functions having zero values
in the time interval $(0,T)$ except $(T_{n-1},T_{n})$, namely
$\forall v\in V_{H}^{(n)},$
\[
v(\cdot,t)=0\text{ for }t\in(0,T)\backslash(T_{n-1},T_{n}).
\]

The equation (\ref{eq: space-time FEM}) can
be decomposed into the following problem: find $u_{H}^{(n)}\in V_{H}^{(n)}$
(where $V_{H}^{(n)}$ will be defined later)
satisfying
\begin{align} \label{eq:space-time FEM coarse decoupled}
 & \int_{T_{n-1}}^{T_{n}}\int_{\Omega}\cfrac{\partial u_{H}^{(n)}}{\partial t}v+\int_{T_{n-1}}^{T_{n}}\int_{\Omega}\kappa\nabla u_{H}^{(n)}\cdot\nabla v+\int_{\Omega}u_{H}^{(n)}(x,T_{n-1}^{+})v(x,T_{n-1}^{+})\nonumber \\
= & \int_{T_{n-1}}^{T_{n}}\int_{\Omega}fv+\int_{\Omega}g_{H}^{(n)}(x)v(x,T_{n-1}^{+}),\;\forall v\in V_{H}^{(n)},
\end{align}
where
\[
g_{H}^{(n)}(\cdot)=\begin{cases}
u_{H}^{(n-1)}(\cdot,T_{n-1}^{-}) & \text{ for }n\geq1,\\
\beta(\cdot) & \text{ for }n=0.
\end{cases}
\]
Then, the solution $u_{H}$ of the problem (\ref{eq: space-time FEM})
is the direct sum of all these $u_{H}^{(n)}$'s, that is $u_{H}=\oplus_{n=1}^{N}u_{H}^{(n)}$.

Next, we motivate the use of space-time multiscale basis functions
by comparing it to space multiscale basis functions.
In particular, we discuss the savings in the reduced models when space-time
multiscale basis functions are used compared to space multiscale
basis functions.
We denote $\{t_{n1},\cdot\cdot\cdot,t_{np}\}$ are $p$ fine time steps
in $(T_{n-1},T_n)$. When we construct space-time multiscale basis functions,
the solution can be represented as $u_H^{(n)} = \sum_{l,i} c_{l,i} \psi_l^{\omega_i}(x,t)$
in the interval  $(T_{n-1},T_n)$. In this case, the number of coefficients
$c_{l,i}$ is related to the size of the reduced system in space-time interval.
On the other hand, if we use only space multiscale basis functions,
we need to construct these multiscale basis functions at each
fine time instant $t_{nj}$, denoted by $\psi_{l}^{\omega_i}(x,t_{nj})$.
The solution $u_H$ spanned by these basis functions will have a much
larger dimension because each time instant is represented by
multiscale basis functions. Thus, performing space-time multiscale
model reduction can provide a substantial CPU savings.

In the next, we will discuss space-time multiscale basis functions.
First, we will construct multiscale basis functions in the offline mode
without using the residual. Next, in Section \ref{sect:online}, we will discuss
online space-time multiscale basis construction.

\subsection{Construction of offline basis functions}
\subsubsection{Snapshot space}

Let $\omega$ be a given coarse neighborhood in space.
We omit the coarse node index to simplify the notations. The construction of the
offline basis functions on coarse time interval $(T_{n-1},T_n)$ starts
with a snapshot space $V_{\text{snap}}^{\omega}$ (or $V_{\text{snap}}^{\omega (n)}$). We also omit the coarse time index $(n)$ to simplify the notations. The snapshot space
$V_{\text{snap}}^{\omega}$ is a set of functions defined on $\omega$
and contains all or most necessary components of the fine-scale
solution restricted to $\omega$. A spectral problem is then solved
in the snapshot space to extract the dominant modes in the snapshot space.
These dominant modes are the offline basis functions and the resulting
reduced space is called the offline space. There are two choices of
 $V_{\text{snap}}^{\omega}$ that are commonly used.

The first choice is to use all possible fine-grid functions in
$\omega\times (T_{n-1},T_{n})$. This snapshot spaces provide accurate
approximation for the solution space; however,
this snapshot space can be very large.
The second choice for the snapshot spaces consists
of solving local problems for all possible boundary conditions.
In particular, we define $\psi_{j}$ as the solution of
\begin{equation}\label{eq:locSnap}
\begin{split}
&\frac{\partial}{\partial t} \psi_{j} -\text{div} (\kappa(x,t) \nabla \psi_{j})=0\ \ \text{in}\ \omega\times (T_{n-1},T_{n}), \\
&\psi_{j}(x,t)=\delta_j(x,t)\  \ \text{on} \ \ \partial \left( \omega\times (T_{n-1},T_{n}) \right).
\end{split}
\end{equation}
Here $\delta_j(x,t)$ is a fine-grid delta function and $\partial \left( \omega\times (T_{n-1},T_{n}) \right)$ denotes
the boundaries $t=T_{n-1}$ and on $\partial \omega\times (T_{n-1},T_{n})$. In general, the computations of these snapshots are expensive since in each local coarse neighborhood $\omega$, $O(M_n^{\partial\omega})$ number of local problems are required to be solved. Here, $M_n^{\partial\omega}$ is the number of fine grids on the boundaries $t=T_{n-1}$ and on $\partial \omega\times (T_{n-1},T_{n})$. A smaller yet accurate snapshot space is needed to build a more efficient multiscale method. We can take an advantage of randomized oversampling concepts \cite{calo2014randomized} and compute only a few snapshot vectors, which will reduce the computational cost remarkably while keeping required accuracy. Next, we introduce randomized snapshots.

Firstly, we introduce the notation for oversampled regions.
We denote by $\omega^{+}$ the oversampled space region of
$\omega \subset\omega^{+}$, defined by adding several fine- or coarse-grid
layers around $\omega$. Also, we define $(T_{n-1}^{*}, T_{n})$ as
the left-side oversampled time region for $(T_{n-1},T_{n})$. In the following,
we generate inexpensive snapshots using random boundary conditions on
the oversampled space-time region $\omega^{+}\times(T_{n-1}^{*},T_{n})$.
That is, instead of solving Equation (\ref{eq:locSnap}) for each fine boundary
node on $\partial \left( \omega\times (T_{n-1},T_{n}) \right)$, we solve
a small number of local problems imposed with random boundary conditions
\begin{equation*}
\begin{split}
&\frac{\partial}{\partial t} \psi_{j}^{+} -\text{div} (\kappa(x,t) \nabla \psi_{j}^{+})=0\ \ \text{in}\ \omega^{+}\times (T_{n-1}^{*},T_{n}), \\
&\psi_{j}^{+}(x,t)= r_l\  \ \text{on} \ \ \partial \left( \omega^{+}\times (T_{n-1}^{*},T_{n}) \right),
\end{split}
\end{equation*}
where $r_l$ are independent identically distributed (i.i.d.) standard Gaussian random vectors on the fine-grid nodes of the boundaries $t=T_{n-1}^{*}$ and on $\partial \omega^{+}\times (T_{n-1}^{*},T_{n})$.
Then the local snapshot space on $\omega^{+}\times (T_{n-1}^{*},T_{n})$ is
\[
V_{\text{snap}}^{\omega^{+}} = \text{span}\{\psi_{j}^{+}(x,t) | j=1,\cdot\cdot\cdot, L^{\omega}+p_{\text{bf}}^{\omega}\},
\]
where $L^{\omega}$ is the number of local offline basis we want to construct in $\omega$ and $p_{\text{bf}}^{\omega}$ is the buffer number. Later on, we use the same buffer number for all $\omega$'s and simply use the notation $p_{\text{bf}}$. In the following sections, if we specify one special coarse neighborhood $\omega_i$, we use the notation $L_i$ to denote the number of local offline basis. With these snapshots, we follow the procedure in the following subsection to generate offline basis functions by using an auxiliary spectral decomposition.

\subsubsection{Offline space}

To obtain the offline basis functions, we need to perform a space reduction by appropriate spectral problems. Motivated by our later convergence analysis, we adopt the following spectral problem on $\omega^{+}\times (T_{n-1},T_{n})$:

Find $(\phi,\lambda)\in V_{\text{snap}}^{\omega^{+}}\times\mathbb{R}$ such that
\begin{equation}\label{eq:eig-problem}
A_n(\phi,v) = \lambda S_n(\phi,v), \quad \forall v \in V_{\text{snap}}^{\omega^{+}},
\end{equation}
where the bilinear operators $A_n(\phi,v)$ and $S_n(\phi,v)$ are defined by
\begin{equation}
\begin{split}
A_n(\phi,v) &= \frac{1}{2} \left( \int_{\omega^{+}}\phi(x,T_{n})v(x,T_{n}) + \int_{\omega^{+}}\phi(x,T_{n-1})v(x,T_{n-1}) \right) + \int_{T_{n-1}}^{T_{n}}\int_{\omega^{+}}\kappa(x,t)\nabla\phi \cdot \nabla v, \\
S_n(\phi,v) &= \int_{\omega_{+}}\phi(x,T_{n-1})v(x,T_{n-1}) +
\int_{T_{n-1}}^{T_{n}}\int_{\omega^{+}}\widetilde{\kappa}^{+}(x,t)\phi v,
\end{split}
\end{equation}
where the weighted function $\widetilde{\kappa}^{+}(x,t)$ is defined by
\[
\widetilde{\kappa}^{+}(x,t) = \kappa(x,t)\sum_{i=1}^{N_c}|\nabla\chi_i^{+}|^2,
\]
$\{\chi_i^{+}\}_{i=1}^{N_c}$ is a partition of unity associated with the oversampled coarse neighborhoods $\{\omega_i^{+}\}_{i=1}^{N_c}$ and satisfies $|\nabla\chi_i^{+}|\geq|\nabla\chi_i|$ on $\omega_i$ where $\chi_i$ is the standard multiscale basis function for the coarse node $x_i$ (that is, with linear boundary conditions for cell problems). More precisely,
\begin{equation}\label{eq:POU}
\begin{split}
-\text{div}(\kappa(x,T_{n-1})\nabla\chi_i) &= 0, \quad \text{in }K\in\omega_i,\\
\chi_i &= g_i, \quad \text{on }\partial K,
\end{split}
\end{equation}
for all $K\in\omega_i$, where $g_i$ is a continuous function on $\partial K$ and is linear on each edge of $\partial K$.

We arrange the eigenvalues $\{\lambda_j^{\omega^{+}}|j=1,2,\cdot\cdot\cdot\,L^{\omega}+p_{\text{bf}}^{\omega}\}$ from (\ref{eq:eig-problem}) in the ascending order, and select the first $L^{\omega}$ eigenfunctions, which are corresponding to the first $L^{\omega}$ ordered eigenvalues, and denote them by $\{\Psi_1^{\omega^{+},\text{\text{off}}},\cdot\cdot\cdot, \Psi_{L^{\omega}}^{\omega^{+},\text{\text{off}}}\}$. Using these eigenfunctions, we can define
\[
\psi_j^{\omega^{+}}(x,t) = \sum_{k=1}^{L^{\omega}+p_{\text{bf}}^{\omega}} (\Psi_j^{\omega^{+},\text{off}})_k \psi_k^{+}(x,t), \qquad j=1,2,\cdot\cdot\cdot, L^{\omega},
\]
where $(\Psi_j^{\omega^{+},\text{off}})_k$ denotes the $k$-th component of $\Psi_j^{\omega^{+},\text{off}}$, and $\psi_k^{+}(x,t)$ is the snapshot basis function computed on $\omega^{+}\times (T_{n-1}^{*},T_{n})$ as in the previous subsection. Then we can obtain the snapshots $\psi_{j}^{\omega}(x,t)$ on the target region $\omega\times (T_{n-1},T_{n})$ by restricting $\psi_j^{\omega^{+}}(x,t)$ onto $\omega\times (T_{n-1},T_{n})$. Finally, the offline basis functions on $\omega\times (T_{n-1},T_{n})$ are defined by $\phi_j^{\omega}(x,t) = \chi\psi_j^{\omega}(x,t)$, where $\chi$ is the standard multiscale basis function from (\ref{eq:POU}) for a generic coarse neighborhood $\omega$. We also define the local offline space on $\omega\times (T_{n-1},T_{n})$ as
\[
V_{\text{off}}^{\omega} = \text{span}\{\phi_{j}^{\omega}(x,t) | j=1,\cdot\cdot\cdot, L^{\omega} \}.
\]
Note that one can take $V_H^{(n)}$ in (\ref{eq:space-time FEM coarse decoupled}) as $V_H^{(n)} = V_{\text{off}}^{(n)} =  \text{span}\{\phi_{j}^{\omega_i}(x,t) | 1\leq i\leq N_c, 1\leq j\leq L_{i} \}$. As a result, $V_{H} = V_{\text{off}}= \oplus_{n=1}^{N}V_{H}^{(n)}$.

\begin{remark}
For the convenience of convergence analysis in Section \ref{sect:analysis}, we also denote by $\{\Psi_1^{\omega^{+},\text{\text{off}}},\cdot\cdot\cdot, \Psi_{L^{\omega}+p_{\text{bf}}^{\omega}}^{\omega^{+},\text{\text{off}}}\}$ all the eigenfunctions from (\ref{eq:eig-problem}) corresponding to the ordered eigenvalues, and define
\[
\psi_j^{\omega^{+}}(x,t) = \sum_{k=1}^{L^{\omega}+p_{\text{bf}}^{\omega}} (\Psi_j^{\omega^{+},\text{off}})_k \psi_k^{+}(x,t), \qquad j=1,2,\cdot\cdot\cdot, L^{\omega}+p_{\text{bf}}^{\omega}.
\]
We note that the snapshot space on $\omega^{+}\times (T_{n-1}^{*},T_{n})$ can be rewritten as
\[
V_{\text{snap}}^{\omega^{+}} = \text{span}\{\psi_j^{\omega^{+}}(x,t) | j=1,\cdot\cdot\cdot, L^{\omega}+p_{\text{bf}}^{\omega}\},
\]
and the snapshot space on $\omega\times (T_{n-1},T_{n})$ can be written as
\[
V_{\text{snap}}^{\omega} = \text{span}\{\psi_j^{\omega}(x,t) | j=1,\cdot\cdot\cdot, L^{\omega}+p_{\text{bf}}^{\omega}\},
\]
where each $\psi_j^{\omega}(x,t)$ is the restriction of $\psi_j^{\omega^{+}}(x,t)$ onto $\omega\times (T_{n-1},T_{n})$. By collecting all local snapshot spaces on each $\omega\times (T_{n-1},T_{n})$, we can obtain the snapshot space $V_{\text{snap}}^{(n)}$ on $\Omega\times (T_{n-1},T_{n})$.

The offline space can be rewritten as
\[
V_{\text{off}}^{\omega} = \text{span}\{\chi\psi_{j}^{\omega}(x,t) | j\leq L^{\omega} \}.
\]
\end{remark}

\begin{remark}
One can use a more general spectral problem in (\ref{eq:eig-problem}) with
\begin{equation}
\begin{split}
A(\phi,v) =& \frac{1}{2} \left( \int_{\omega}\phi(x,T_{n})v(x,T_{n}) + \int_{\omega}\phi(x,T_{n-1})v(x,T_{n-1}) \right) + \int_{T_{n-1}}^{T_{n}}\int_{\omega}\kappa(x,t)\nabla\phi \cdot \nabla v \\
 & + \int_{T_{n-1}}^{T_{n}}\int_{\omega}\kappa(x,t)(z_{\phi}z_{v}+\nabla z_{\phi}\cdot\nabla z_{v}),\\
S(\phi,v) =& \int_{\omega}\phi(x,T_{n-1})v(x,T_{n-1}) +
\int_{T_{n-1}}^{T_{n}}\int_{\omega}\widetilde{\kappa}(x,t)\phi v
+\int_{T_{n-1}}^{T_{n}}\int_{\omega}\kappa|\nabla\chi|^{2}z_{\phi}z_{v},
\end{split}
\end{equation}
where for any $w \in V_{\text{snap}}^{\omega}$, $z_w$ satisfies
\[
-z_{w}(x,t)+\nabla\cdot(\kappa(x,t)\nabla z_{w}(x,t))=\chi \frac{\partial w}{\partial t}, \;\quad \forall t\in(T_{n-1},T_{n}).
\]
With this spectral problem, one can simplify the proof presented in
Section \ref{sect:analysis}. However, the numerical implementation of this local spectral problem is more complicated.
\end{remark}

\section{Convergence analysis}\label{sect:analysis}

In this section, we will analyze the convergence of our proposed method. To start, we firstly define two norms that are used in the analysis. We define $\|\cdot\|_{V^{(n)}}^{2}$ and $\|\cdot\|_{W^{(n)}}^{2}$ by
\begin{align*}
\|u\|_{V^{(n)}}^{2} & =\int_{T_{n-1}}^{T_{n}}\int_{\Omega}\kappa|\nabla u|^{2}+\cfrac{1}{2}\int_{\Omega}u^{2}(x,T_{n}^{-})+\cfrac{1}{2}\int_{\Omega}u^{2}(x,T_{n-1}^{+}),\\
\|u\|_{W^{(n)}}^{2} & =\|u\|_{V^{(n)}}^{2}+\int_{T_{n-1}}^{T_{n}}\|u_{t}(\cdot,t)\|_{H^{-1}(\kappa,\Omega)}^{2},
\end{align*}
where
\[
\|u\|_{H^{-1}_{(\kappa,\Omega)}} = \sup_{v\in H^1_0(\Omega)}\cfrac{\int_{\Omega} u v}{(\int_\Omega \kappa |\nabla v|^2)^{\frac{1}{2}}}.
\]

In the following, we will show the $V^{(n)}$-norm of the error $u_{h}-u_{H}$ can be bounded by the $W^{(n)}$-norm of the difference $u_{h}-w$ for any $w\in V_{H}^{(n)}$, where $u_{h}$ is the fine scale solution from Eqn.(\ref{eq:fine problem}), $u_{H}$ is the multiscale solution from Eqn.(\ref{eq:space-time FEM coarse decoupled}), and $V_{H}^{(n)}$ is the multiscale space defined in the previous section. The proof of this lemma will be presented in the Appendix.

\begin{lemma}\label{lem: cea lemma}
Let $u_{h}$ be the fine scale solution from Equation (\ref{eq:fine problem}), $u_{H}$ be the multiscale solution from Equation (\ref{eq:space-time FEM coarse decoupled}). We have the following estimate
\[
\|u_{h}-u_{H}\|_{V^{(n)}}^{2}\leq
\begin{cases}
C\|u_{h}-w\|_{W^{(n)}}^{2} & \mbox{for n}=1,\\
C(\|u_{h}-w\|_{W^{(n)}}^{2}+\|u_{h}-u_{H}\|_{V^{(n-1)}}^{2}) & \mbox{for n}>1,
\end{cases}
\]
for any $w\in V_{H}^{(n)}$. If we define the $V^{(0)}$-norm to be $0$, then we can write
\[
\|u_{h}-u_{H}\|_{V^{(n)}}^{2}\leq
C(\|u_{h}-w\|_{W^{(n)}}^{2}+\|u_{h}-u_{H}\|_{V^{(n-1)}}^{2}) \quad \mbox{for n}\geq 1,
\]
for any $w\in V_{H}^{(n)}$.
\end{lemma}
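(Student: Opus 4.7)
The proof is a Cea-type argument done slab-by-slab. I would first identify the slab bilinear form
\begin{equation*}
B_n(u,v) = \int_{T_{n-1}}^{T_n}\!\!\int_\Omega \frac{\partial u}{\partial t}\, v + \int_{T_{n-1}}^{T_n}\!\!\int_\Omega \kappa\,\nabla u\cdot\nabla v + \int_\Omega u(x,T_{n-1}^+)v(x,T_{n-1}^+),
\end{equation*}
which is exactly the left-hand side of (\ref{eq:space-time FEM coarse decoupled}). Testing (\ref{eq:fine problem}) with an arbitrary $v\in V_H^{(n)}\subset V_h$ that is supported only on slab $n$, the jump sum collapses to the single contribution $(u_h(T_{n-1}^+)-u_h(T_{n-1}^-))\,v(T_{n-1}^+)$, so that $B_n(u_h,v) = \int_{T_{n-1}}^{T_n}\!\int_\Omega fv + \int_\Omega u_h(x,T_{n-1}^-)v(x,T_{n-1}^+)$. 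The multiscale equation (\ref{eq:space-time FEM coarse decoupled}) is the same with $g_H^{(n)}(x)$ in place of $u_h(x,T_{n-1}^-)$, so subtracting yields the slab orthogonality
\begin{equation*}
B_n(u_h-u_H,v) = \int_\Omega (u_h-u_H)(x,T_{n-1}^-)\,v(x,T_{n-1}^+),
\end{equation*}
which vanishes identically for $n=1$ because both initial data equal $\beta$.

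Next I would split $u_h-u_H = (u_h-w)+\phi$ with $w\in V_H^{(n)}$ and $\phi := w-u_H \in V_H^{(n)}$, and compute $B_n(\phi,\phi)$. Integrating the time-derivative term by parts on $(T_{n-1},T_n)$ converts $\int\partial_t\phi\cdot\phi$ into $\tfrac{1}{2}(\phi(T_n^-)^2-\phi(T_{n-1}^+)^2)$, which combines with the explicit $\phi(T_{n-1}^+)^2$ term in $B_n$ to yield the coercivity identity $B_n(\phi,\phi)=\|\phi\|_{V^{(n)}}^2$. For the continuity bound $B_n(u_h-w,\phi)\le C\|u_h-w\|_{W^{(n)}}\|\phi\|_{V^{(n)}}$, the $\kappa$-gradient and endpoint terms are handled by Cauchy--Schwarz directly in $V^{(n)}$, while the time-derivative contribution is treated by duality,
\begin{equation*}
\int_{T_{n-1}}^{T_n}\!\!\int_\Omega \frac{\partial(u_h-w)}{\partial t}\,\phi \le \Bigl(\int_{T_{n-1}}^{T_n}\|\partial_t(u_h-w)\|_{H^{-1}(\kappa,\Omega)}^2\,dt\Bigr)^{1/2}\Bigl(\int_{T_{n-1}}^{T_n}\!\!\int_\Omega \kappa|\nabla\phi|^2\Bigr)^{1/2},
\end{equation*}
which is precisely the extra term baked into $\|\cdot\|_{W^{(n)}}$.

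Combining coercivity with orthogonality gives $\|\phi\|_{V^{(n)}}^2 = B_n(u_h-u_H,\phi)-B_n(u_h-w,\phi)$. The first term is bounded by Cauchy--Schwarz as $\|(u_h-u_H)(\cdot,T_{n-1}^-)\|_{L^2(\Omega)}\|\phi(\cdot,T_{n-1}^+)\|_{L^2(\Omega)}\le C\|u_h-u_H\|_{V^{(n-1)}}\|\phi\|_{V^{(n)}}$, using that the $V^{(n-1)}$ norm controls $\tfrac{1}{2}\|u(\cdot,T_{n-1}^-)\|_{L^2(\Omega)}^2$ at its right endpoint. Dividing by $\|\phi\|_{V^{(n)}}$, applying the triangle inequality $\|u_h-u_H\|_{V^{(n)}}\le\|u_h-w\|_{V^{(n)}}+\|\phi\|_{V^{(n)}}\le\|u_h-w\|_{W^{(n)}}+\|\phi\|_{V^{(n)}}$, and squaring produces the stated estimate; the $n=1$ case follows with the convention $\|\cdot\|_{V^{(0)}}=0$.

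The step that requires the most care is the choice of duality norm for $\partial_t u$ in the definition of $\|\cdot\|_{W^{(n)}}$: the high-contrast coefficient $\kappa$ appears in the continuity estimate for $B_n$, so if $\partial_t u$ were measured in the ordinary $H^{-1}$ norm, one would pick up a contrast-dependent constant and ruin the robustness of the analysis. Using $H^{-1}(\kappa,\Omega)$ with $\kappa$-weighted gradients keeps everything contrast-independent; this is the same motivation underlying the weighted bilinear forms $A$ and $S$ in the remark preceding the lemma. Beyond that single modelling choice, the argument is a standard slab-by-slab Cea--Galerkin calculation, with the induction on $n$ entering only through the initial-data mismatch $(u_h-u_H)(\cdot,T_{n-1}^-)$.
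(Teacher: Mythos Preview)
Your proof is correct and follows essentially the same Cea-type argument as the paper: the same coercivity identity $B_n(v,v)=\|v\|_{V^{(n)}}^2$, the same slab orthogonality from subtracting (\ref{eq:space-time FEM coarse decoupled}) and (\ref{eq:fine problem}), and the same $H^{-1}(\kappa,\Omega)$ duality for the time-derivative term. The only cosmetic difference is that the paper tests directly with $u_h-u_H$, splits the second factor as $(u_h-w)+(w-u_H)$, and then integrates by parts in $t$ to shift $\partial_t$ onto $u_h-w$, whereas you work with $\phi=w-u_H$ from the start and never need that integration by parts; the two routes are equivalent.
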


Therefore, to estimate the error of our multiscale solution, we only need to find a function $w$ in $V_{H}^{(n)}$ such that $\|u_{h}-w\|_{W^{(n)}}$is small. Except for Lemma \ref{lem: cea lemma}, we still need the following lemma to estimate $\|u_{h}-w\|_{W^{(n)}}.$

\begin{lemma}  \label{lem: Caccioppoli}
For any $v$ satisfying
\[
\frac{\partial}{\partial t}v-\text{div}(\kappa(x,t)\nabla v)=0\ \ \text{in}\ \ \omega_i\times(T_{n-1},T_{n}),
\]
we have
\begin{equation}
\begin{split}\int_{\omega_i}\chi_i^{2}v^{2}(x,T_{n}) + \int_{T_{n-1}}^{T_{n}}\int_{\omega_i}\kappa|\chi_i^{2}||\nabla v|^{2} \preceq \int_{\omega_i}\chi_i^{2}v^{2}(x,T_{n-1}) + \int_{T_{n-1}}^{T_{n}}\int_{\omega_i}\kappa|\nabla\chi_i|^{2}v^{2},
\end{split}
\end{equation}
where the notation $F \preceq G$ means $F \leq \mathcal{C}G$ with a constant $\mathcal{C}$ independent of the mesh, contrast and the functions involved.
\end{lemma}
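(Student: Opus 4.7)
The statement is a classical parabolic Caccioppoli inequality, and my plan is to derive it by the standard energy-estimate route: test the local equation against $\chi_i^2 v$, integrate over space-time, and use the product rule to generate the cutoff terms on the right-hand side. Concretely, I would multiply the equation $v_t - \mathrm{div}(\kappa \nabla v)=0$ by the test function $\chi_i^2 v$ and integrate over $\omega_i$. Since $\chi_i$ is the partition-of-unity function supported on $\omega_i$ and vanishes on $\partial \omega_i$ (from (\ref{eq:POU})), the boundary terms generated by integration by parts vanish, and we obtain
\[
\int_{\omega_i} v_t\, \chi_i^2 v \;+\; \int_{\omega_i} \kappa\, \nabla v \cdot \nabla(\chi_i^2 v) \;=\; 0.
\]

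Next I would exploit the time-independence of $\chi_i$ to rewrite the first term as a total derivative, $\int_{\omega_i} v_t\, \chi_i^2 v = \tfrac{1}{2}\tfrac{d}{dt}\int_{\omega_i}\chi_i^2 v^2$, and expand the gradient via the product rule $\nabla(\chi_i^2 v) = \chi_i^2 \nabla v + 2\chi_i v\, \nabla \chi_i$. Integrating in time from $T_{n-1}$ to $T_n$ then produces the identity
\[
\tfrac{1}{2}\!\int_{\omega_i}\!\chi_i^2 v^2(x,T_n) + \!\int_{T_{n-1}}^{T_n}\!\!\int_{\omega_i}\!\kappa\,\chi_i^2|\nabla v|^2
= \tfrac{1}{2}\!\int_{\omega_i}\!\chi_i^2 v^2(x,T_{n-1}) - 2\!\int_{T_{n-1}}^{T_n}\!\!\int_{\omega_i}\!\chi_i v\, \kappa\, \nabla v \cdot \nabla \chi_i.
\]

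The crux is then to absorb the cross term on the right. Applying Cauchy--Schwarz pointwise and a weighted Young inequality of the form $2ab \le \tfrac{1}{2}a^2 + 2b^2$ with $a = \chi_i \kappa^{1/2}|\nabla v|$ and $b = v\, \kappa^{1/2}|\nabla \chi_i|$ gives
\[
\Bigl|\,2\!\int_{T_{n-1}}^{T_n}\!\!\int_{\omega_i}\!\chi_i v\, \kappa\, \nabla v \cdot \nabla \chi_i\,\Bigr|
\;\le\; \tfrac{1}{2}\!\int_{T_{n-1}}^{T_n}\!\!\int_{\omega_i}\!\kappa\, \chi_i^2 |\nabla v|^2
\;+\; 2\!\int_{T_{n-1}}^{T_n}\!\!\int_{\omega_i}\!\kappa\, |\nabla \chi_i|^2 v^2.
\]
Substituting back and moving the $\tfrac{1}{2}\int\kappa\chi_i^2|\nabla v|^2$ piece to the left leaves exactly the desired bound, with an absolute constant $\mathcal{C}$ independent of the mesh size, the contrast of $\kappa$, and $v$.

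The main (mild) obstacle is purely technical: justifying the integration by parts and the $\tfrac{d}{dt}$-identity for the fine-scale Galerkin/weak solution $v$ on $\omega_i$, which requires that $\chi_i^2 v$ be an admissible test function. Because $\chi_i$ is supported in $\overline{\omega_i}$ and vanishes on $\partial\omega_i$, $\chi_i^2 v$ has the needed spatial boundary behavior, and the time-regularity follows from the parabolic nature of $v$; no trace issue arises at $t = T_{n-1}, T_n$ other than the evaluations that are already present in the statement. Everything else is a routine product-rule and Young-inequality computation, so no delicate interpolation or spectral argument is needed.
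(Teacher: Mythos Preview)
Your argument is correct and is the standard energy-estimate proof of the parabolic Caccioppoli inequality. The paper itself states Lemma~\ref{lem: Caccioppoli} without proof (the Appendix only proves Lemma~\ref{lem: cea lemma}), so there is nothing to compare against; your derivation---test with $\chi_i^2 v$, use that $\chi_i$ is time-independent and vanishes on $\partial\omega_i$, then absorb the cross term via Young's inequality---is exactly the expected route and yields the inequality with an explicit absolute constant (e.g.\ $\mathcal{C}=4$).
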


Now, we are ready to prove our main result in this section.
\begin{theorem}\label{main thm}

Let $u_{h}$ be the fine scale solution from Equation (\ref{eq:fine problem}), $u_{H}$ be the multiscale solution from Equation (\ref{eq:space-time FEM coarse decoupled}). Let $\tilde{u}_{h}=\text{argmin}_{v\in V_{\text{snap}}^{(n)}}\{\|u_{h}-v\|_{W^{(n)}}\}$ and we denote $\tilde{u}_{h}=\sum_{i}\chi_{i}\tilde{u}_{h,i}$ with $\tilde{u}_{h,i}=\sum_{j}c_{i,j}\psi_{j}^{\omega_i}$.
There holds
\[
\|u_{h}-u_{H}\|_{V^{(n)}}^{2}\preceq
M(DEF+1)\sum_{i}\left(\frac{1}{\lambda_{L_{i}+1}^{\omega_i^{+}}}\|\tilde{u}_{h,i}^{+}\|_{V^{(n)}(\omega_i^{+})}^{2}\right)
+\|u_{h}-\tilde{u}_{h}\|_{W^{(n)}}^{2}+\|u_{h}-u_{H}\|_{V^{(n-1)}}^{2},
\]
where

$M=\max_{K}\{M_{K}\}$ with $M_{K}$ is the number of coarse neighborhoods $\omega_{i}$'s which have nonempty intersection with $K$,

$D=\max\{D_{i}\}$ with $D_{i}=\sup_{v\in H_{0}^{1}(\Omega)}\cfrac{\int_{\omega_{i}}\kappa|\nabla\chi_{i}|^{2}v^{2}+\int_{\omega_{i}}\kappa\chi_{i}^{2}|\nabla v|^{2}}{\int_{\omega_{i}}\kappa|\nabla v|^{2}+\int_{\omega_{i}}\kappa v{}^{2}}$,

$E=\sup_{w\in H_{0}^{1}(\Omega)}\cfrac{\int_{\Omega}\kappa|\nabla w|^{2}+\int_{\Omega}\kappa w{}^{2}}{\int_{\Omega}\kappa|\nabla w|^{2}}$,

$F=\max\{F_{i}\}$ with $F_i = \cfrac{1}{\min_{x\in\omega_i}\{|\chi_i^{+}(x)|^2\}}$,

$\tilde{u}_{h,i}^{+}=\sum_{j}c_{i,j}\psi_{j}^{\omega_i^{+}}$ and the local norm $\|\cdot\|_{V^{(n)}(\omega_i^{+})}$ is defined by
\[
\|v\|_{V^{(n)}(\omega_i^{+})}^{2}=\int_{T_{n-1}}^{T_{n}}\int_{\omega_i^{+}}\kappa|\nabla v|^{2}+\frac{1}{2}\int_{\omega_i^{+}}v^{2}(x,T_{n}^{-})+\frac{1}{2}\int_{\omega_i^{+}}v^{2}(x,T_{n-1}^{+}).
\]

\end{theorem}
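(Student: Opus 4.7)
The plan is to combine the quasi-Cea estimate (Lemma \ref{lem: cea lemma}), a carefully constructed intermediate function $w \in V_H^{(n)}$, and the spectral problem (\ref{eq:eig-problem}) together with the Caccioppoli-type inequality of Lemma \ref{lem: Caccioppoli}. First I would apply Lemma \ref{lem: cea lemma} to reduce the theorem to controlling $\|u_h - w\|_{W^{(n)}}$ for a well-chosen $w \in V_H^{(n)}$, and then split that quantity using the triangle inequality:
\[
\|u_h - w\|_{W^{(n)}}^{2} \preceq \|u_h - \tilde{u}_h\|_{W^{(n)}}^{2} + \|\tilde{u}_h - w\|_{W^{(n)}}^{2}.
\]
The first term is already present in the bound, so the substantive work is to bound the second term by the spectral quantity $\sum_i \lambda_{L_i+1}^{-1}\|\tilde{u}_{h,i}^{+}\|_{V^{(n)}(\omega_i^{+})}^{2}$.

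I would define $w = \sum_i \chi_i w_i$ with $w_i = \sum_{j \le L_i} c_{i,j}\psi_{j}^{\omega_i}$ (the truncation of $\tilde{u}_{h,i}$ to the offline basis), so that $\tilde{u}_h - w = \sum_i \chi_i e_i$ where $e_i = \sum_{j > L_i} c_{i,j}\psi_{j}^{\omega_i}$ lies in the span of the discarded eigenmodes. Lifting to $\omega_i^{+}$, set $e_i^{+} = \sum_{j > L_i} c_{i,j}\psi_{j}^{\omega_i^{+}}$; the $S_n$-orthogonality of the eigenfunctions and the fact that $A_n(\tilde{u}_{h,i}^{+},\tilde{u}_{h,i}^{+}) = \|\tilde{u}_{h,i}^{+}\|_{V^{(n)}(\omega_i^{+})}^{2}$ give
\[
S_n(e_i^{+},e_i^{+}) \;\le\; \frac{1}{\lambda_{L_i+1}^{\omega_i^{+}}}\, A_n(e_i^{+},e_i^{+}) \;\le\; \frac{1}{\lambda_{L_i+1}^{\omega_i^{+}}}\,\|\tilde{u}_{h,i}^{+}\|_{V^{(n)}(\omega_i^{+})}^{2}.
\]
Since $\widetilde{\kappa}^{+} \geq \kappa |\nabla \chi_i^{+}|^{2}$ and $|\nabla \chi_i^{+}| \ge |\nabla \chi_i|$ on $\omega_i$, this controls $\int_{\omega_i}\kappa|\nabla\chi_i|^{2}e_i^{2}$ and $\int_{\omega_i}\chi_i^{2}e_i^{2}(x,T_{n-1})$. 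Feeding these bounds into Lemma \ref{lem: Caccioppoli} (recall $e_i$ satisfies the homogeneous parabolic snapshot equation), I then obtain control of $\int \kappa \chi_i^{2}|\nabla e_i|^{2}$ and $\int \chi_i^{2} e_i^{2}(x,T_n)$. Expanding $\nabla(\chi_i e_i) = e_i\nabla\chi_i + \chi_i\nabla e_i$ and summing over $i$ with the finite-overlap constant $M$ yields the desired spectral bound for the $V^{(n)}$-part of $\|\sum_i \chi_i e_i\|_{W^{(n)}}^{2}$.

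The main obstacle is the time-derivative piece $\int_{T_{n-1}}^{T_n}\|(\sum_i \chi_i e_i)_t\|_{H^{-1}(\kappa,\Omega)}^{2}$, since the spectral problem $A_n/S_n$ carries no temporal information. My plan here is to use again that each $e_i$ solves $\partial_t e_i = \mathrm{div}(\kappa\nabla e_i)$ on $\omega_i$: for any test $v \in H_0^{1}(\Omega)$, integration by parts (using $\chi_i = 0$ on $\partial\omega_i$) yields
\[
\int_{\Omega}\Bigl(\sum_i \chi_i e_i\Bigr)_t v = -\sum_i \int_{\omega_i} \kappa\chi_i\,\nabla v\cdot\nabla e_i \;-\; \sum_i \int_{\omega_i} \kappa v\,\nabla\chi_i\cdot\nabla e_i.
\]
The first sum is handled by Cauchy--Schwarz against $\int \kappa\chi_i^{2}|\nabla e_i|^{2}$ (already bounded via Caccioppoli). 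The second sum is the delicate one: Cauchy--Schwarz leaves a factor $\int_{\omega_i}\kappa|\nabla e_i|^{2}$ without a $\chi_i$ weight. I would convert this by using $|\chi_i^{+}|^{2} \ge F_i^{-1}$ on $\omega_i$ to insert a $(\chi_i^{+})^{2}$ weight, then apply Caccioppoli again on the oversampled patch $\omega_i^{+}$ to bound $\int \kappa (\chi_i^{+})^{2}|\nabla e_i^{+}|^{2}$ by $S_n(e_i^{+},e_i^{+})$, recovering the $\lambda_{L_i+1}^{-1}$ factor at the cost of the constant $F$. The companion factor $(\int\kappa|\nabla\chi_i|^{2}v^{2})^{1/2}$ is absorbed into $(\int\kappa|\nabla v|^{2})^{1/2}$ by the definition of $D_i$ composed with $E$ (the latter turning $\int \kappa v^{2}$ into $\int \kappa|\nabla v|^{2}$ via Poincar\'e for $v \in H_0^{1}(\Omega)$). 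Taking the supremum over $v$, squaring, integrating in time, and summing over $i$ with the overlap constant $M$ produces precisely the factor $M(DEF+1)$ multiplying the spectral sum. Combining with Step 1 and Step 2 then closes the argument.
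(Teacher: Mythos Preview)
Your proposal is correct and follows essentially the same route as the paper: reduce via Lemma~\ref{lem: cea lemma}, choose $w=\sum_i\chi_i\sum_{j\le L_i}c_{i,j}\psi_j^{\omega_i}$, bound $\|\sum_i\chi_i e_i\|_{V^{(n)}}^2$ by Caccioppoli plus oversampling, and handle the $H^{-1}$ time-derivative term by using $\partial_t e_i=\mathrm{div}(\kappa\nabla e_i)$, Cauchy--Schwarz, the constants $D,E$, and the $F_i$-trick of inserting $(\chi_i^{+})^2$ before a second Caccioppoli on $\omega_i^{+}$. The order of presentation differs slightly (you invoke the spectral bound $S_n(e_i^{+},e_i^{+})\le\lambda_{L_i+1}^{-1}A_n(e_i^{+},e_i^{+})$ earlier), but every substantive step matches the paper's argument.
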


\begin{proof}
By Lemma \ref{lem: cea lemma},
\begin{equation}\label{eq:general estimate}
\|u_{h}-u_{H}\|_{V^{(n)}}^2 \preceq
\inf_{w\in V_H^{(n)}}\|u_{h}-w\|_{W^{(n)}}^2+\|u_{h}-u_{H}\|_{V^{(n-1)}}^{2}.
\end{equation}
Therefore, we need to estimate $\inf_{w\in V_{H}^{(n)}}\|u_{h}-w\|_{W^{(n)}}^2.$
Note that $\tilde{u}_{h}=\sum_{i}\chi_{i}\tilde{u}_{h,i}=\sum_{i}\sum_{j}c_{i,j}\chi_{i}\psi_{j}^{\omega_i}$.
Using this expression, we can define a projection of $\tilde{u}_{h}$
into $V_{H}^{(n)}$ by
\begin{equation*}
P(\tilde{u}_{h})=\sum_{i}\sum_{j\leq L_{i}}c_{i,j}\chi_{i}\psi_{j}^{\omega_i}.\label{eq:expression of projection of u_h}
\end{equation*}
Then
\begin{align}
\inf_{w\in V_{H}^{(n)}}\|u_{h}-w\|_{W^{(n)}}^2
&\leq \|u_{h}-P(\tilde{u}_{h})\|_{W^{(n)}}^2 \nonumber\\
&\leq \|u_{h}-\tilde{u}_{h}\|_{W^{(n)}}^2 + \|\tilde{u}_{h}-P(\tilde{u}_{h})\|_{W^{(n)}}^2. \label{eq:estimate inf}
\end{align}
We will estimate $\|\tilde{u}_{h}-P(\tilde{u}_{h})\|_{W^{(n)}}^2$. \\

By the definition of $\|\cdot\|_{W^{(n)}}$, we have
\begin{align*}
\|\tilde{u}_{h}-P(\tilde{u}_{h})\|_{W^{(n)}}^{2} & =\|\sum_{i}\chi_{i}(\tilde{u}_{h,i}-P(\tilde{u}_{h,i}))\|_{V^{(n)}}^{2}
+\int_{T_{n-1}}^{T_{n}}\|\cfrac{\partial(\sum_{i}\chi_{i}(\tilde{u}_{h,i}-P(\tilde{u}_{h,i})))}{\partial t}\|_{H^{-1}(\kappa,\Omega)}^{2}\;,
\end{align*}
where $\tilde{u}_{h,i}=\sum_{j}c_{i,j}\psi_{j}^{\omega_i}$ and $P(\tilde{u}_{h,i})=\sum_{j\leq L_{i}}c_{i,j}\psi_{j}^{\omega_i}$.
Let $e_{i}=\tilde{u}_{h,i}-P(\tilde{u}_{h,i})$, then
$\tilde{u}_{h}-P(\tilde{u}_{h})=\sum_{i}\chi_{i}e_{i}$.
Therefore,
\begin{equation}\label{eq:W-norm estimate}
\|\tilde{u}_{h}-P(\tilde{u}_{h})\|_{W^{(n)}}^{2}=\|\sum_{i}\chi_{i}e_{i}\|_{V^{(n)}}^{2}
+\int_{T_{n-1}}^{T_{n}}\|\cfrac{\partial(\sum_{i}\chi_{i}e_{i})}{\partial t}\|_{H^{-1}(\kappa,\Omega)}^{2}.
\end{equation}
In the following, we will estimate the two terms on the right hand side of (\ref{eq:W-norm estimate}), separately. Then the proof is done.\\

First, we estimate the term $\|\sum_{i}\chi_{i}e_{i}\|_{V^{(n)}}^{2}$. We define the local norm $\|\cdot\|_{V^{(n)}(K)}$ by
\[
\|v\|_{V^{(n)}(K)}^{2}=\int_{T_{n-1}}^{T_{n}}\int_{K}\kappa|\nabla v|^{2} +\frac{1}{2}\int_{K}v^{2}(x,T_{n}^{-})+\frac{1}{2}\int_{K}v^{2}(x,T_{n-1}^{+}).
\]
Then we have
\[
\|\sum_{i}\chi_{i}e_{i}\|_{V^{(n)}}^{2}\leq\sum_{K}\|\sum_{i}\chi_{i}e_{i}\|_{V^{(n)}(K)}^{2}.
\]
Moreover,
\begin{align*}
\|\sum_{i}\chi_{i}e_{i}\|_{V^{(n)}(K)}^{2} & \leq M_{K}\sum_{i}\|\chi_{i}e_{i}\|_{V^{(n)}(K)}^{2},
\end{align*}
where $M_{K}$ is the number of coarse neighborhoods $\omega_{i}$'s which have
nonempty intersection with $K$. Therefore,
\begin{align}
\|\sum_{i}\chi_{i}e_{i}\|_{V^{(n)}}^{2} & \leq\sum_{K}M_{K}\sum_{i}\|\chi_{i}e_{i}\|_{V^{(n)}(K)}^{2}\nonumber\\
 & \leq M\sum_{i}\|\chi_{i}e_{i}\|_{V^{(n)}(\omega_{i})}^{2}, \label{eq:V-norm estimate 1}
\end{align}
where $M=\max_{K}\{M_{K}\}$. Now, we need to estimate the term $\|\chi_{i}e_{i}\|_{V^{(n)}(\omega_{i})}^{2}$. Since
$\nabla(\chi_{i}e_{i})=e_{i}\nabla\chi_{i}+\chi_{i}\nabla e_{i}$,
we obtain
\begin{eqnarray*}
\|\chi_{i}e_{i}\|_{V^{(n)}(\omega_{i})}^{2} & \leq & 2\int_{T_{n-1}}^{T_{n}}\int_{\omega_{i}}\kappa|\nabla\chi_i|^{2}e_{i}^{2}+2\int_{T_{n-1}}^{T_{n}}\int_{\omega_{i}}\kappa\chi_i^{2}|\nabla e_{i}|^{2}\\
 &  & +\cfrac{1}{2}\int_{\omega_{i}}\chi_i^{2}e_{i}^{2}(x,T_{n}^{-})+\cfrac{1}{2}\int_{\omega_{i}}\chi_i^{2}e_{i}^{2}(x,T_{n-1}^{+}).\nonumber
\end{eqnarray*}
Using Lemma \ref{lem: Caccioppoli}, we have
\begin{align*}
\|\chi_{i}e_{i}\|_{V^{(n)}(\omega_{i})}^{2}
& \preceq
\int_{T_{n-1}}^{T_{n}}\int_{\omega_{i}}\kappa|\nabla\chi_i|^{2}e_{i}^{2}
+\int_{\omega_{i}}\chi_i^{2}e_{i}^{2}(x,T_{n-1}^{+})\\
& \preceq
\int_{T_{n-1}}^{T_{n}}\int_{\omega_{i}}\kappa|\nabla\chi_i|^{2}e_{i}^{2}
+\int_{\omega_{i}}e_{i}^{2}(x,T_{n-1}^{+}).
\end{align*}
Now we introduce notations in $\omega_i^{+}$ and denote $e_{i}^{+}=\tilde{u}_{h,i}^{+}-P(\tilde{u}_{h,i}^{+})$,
where $\tilde{u}_{h,i}^{+}=\sum_{j}c_{i,j}\psi_{j}^{\omega_i^{+}}$ and $P(\tilde{u}_{h,i}^{+})=\sum_{j\leq L_{i}}c_{i,j}\psi_{j}^{\omega_i^{+}}$.
It is obvious that $\tilde{u}_{h,i}^{+}|_{\omega_i} = \tilde{u}_{h,i}$, $P(\tilde{u}_{h,i}^{+})|_{\omega_i} =P(\tilde{u}_{h,i})$ and $e_{i}^{+}|_{\omega_i} = e_{i}$. And there holds the following two inequalities,
\begin{equation}\label{eq:oversample estimate 1}
\int_{T_{n-1}}^{T_{n}}\int_{\omega_{i}}\kappa|\nabla\chi_i|^{2}e_{i}^{2} \leq \int_{T_{n-1}}^{T_{n}}\int_{\omega_{i}^{+}}\kappa|\nabla\chi_i^{+}|^{2}|e_{i}^{+}|^{2},
\end{equation}
and
\begin{equation}\label{eq:oversample estimate 2}
\int_{\omega_{i}}e_{i}^{2}(x,T_{n-1}^{+}) \leq \int_{\omega_i^{+}}|e_i^{+}(x,T_{n-1}^{+})|^{2}.
\end{equation}
Thus,
\begin{equation}\label{eq:V-norm estimate 2}
\|\chi_{i}e_{i}\|_{V^{(n)}(\omega_{i})}^{2}
\preceq
\int_{T_{n-1}}^{T_{n}}\int_{\omega_{i}^{+}}\kappa|\nabla\chi_i^{+}|^{2}|e_{i}^{+}|^{2}
+\int_{\omega_i^{+}}|e_i^{+}(x,T_{n-1}^{+})|^{2}.
\end{equation}
Substituting (\ref{eq:V-norm estimate 2}) into (\ref{eq:V-norm estimate 1}), we immediately obtain
\begin{equation}\label{eq:V-norm estimate 3}
\|\sum_{i}\chi_{i}e_{i}\|_{V^{(n)}}^{2} \preceq M\sum_{i}\left(\int_{T_{n-1}}^{T_{n}}\int_{\omega_{i}^{+}}\kappa|\nabla\chi_i^{+}|^{2}|e_{i}^{+}|^{2}
+\int_{\omega_i^{+}}|e_i^{+}(x,T_{n-1}^{+})|^{2}\right).
\end{equation}\\

Next, we will estimate the term $\int_{T_{n-1}}^{T_{n}}\|\frac{\partial(\sum_{i}\chi_{i}e_{i})}{\partial t}\|_{H^{-1}(\kappa,\Omega)}^{2}$.
By definition, we have
\begin{align}
\int_{T_{n-1}}^{T_{n}}\|\cfrac{\partial(\sum_{i}\chi_{i}e_{i})}{\partial t}\|_{H^{-1}(\kappa,\Omega)}^{2}
& =\int_{T_{n-1}}^{T_{n}}\sup_{w\in H_{0}^{1}(\Omega)}\frac{\left(\int_{\Omega}\sum_{i}\chi_{i}\frac{\partial e_{i}}{\partial t}w\right)^{2}}{\int_{\Omega}\kappa|\nabla w|^{2}}\nonumber \\
& \leq\int_{T_{n-1}}^{T_{n}}\sup_{w\in H_{0}^{1}(\Omega)}\frac{\left(\sum_{i}|\int_{\omega_i}\chi_{i}\frac{\partial e_{i}}{\partial t}w|\right)^{2}}{\int_{\Omega}\kappa|\nabla w|^{2}}.\label{eq:H^=00007B-1=00007D norm global to local}
\end{align}
Since $e_{i}$ satisfies the equation
\begin{equation*}
\frac{\partial}{\partial t}e_{i}-\text{div}(\kappa(x,t)\nabla e_{i})=0\text{ in }\omega_{i}\times(T_{n-1},T_{n}),\label{eq:loc pde}
\end{equation*}
we have
\begin{align*}
\int_{\omega_{i}}\chi_{i}\frac{\partial e_{i}}{\partial t}w & =-\int_{\omega_{i}}\kappa(x,t)\nabla e_{i}\cdot\nabla(\chi_{i}w)\\
 & =-\int_{\omega_{i}}\kappa(x,t)w\nabla e_{i}\cdot\nabla\chi_{i}-\int_{\omega_{i}}\kappa(x,t)\chi_{i}\nabla e_{i}\cdot\nabla w.
\end{align*}
Moreover,
\begin{align}
\left|\int_{\omega_{i}}\chi_{i}\frac{\partial e_{i}}{\partial t}w \right|
= & \left|-\int_{\omega_{i}}\kappa w\nabla e_{i}\cdot\nabla\chi_{i}-\int_{\omega_{i}}\kappa\chi_{i}\nabla e_{i}\cdot\nabla w\right|\nonumber \\
\leq & \left(\int_{\omega_{i}}\kappa w^{2}|\nabla\chi_{i}|^{2}\right)^{\frac{1}{2}}\left(\int_{\omega_{i}}\kappa|\nabla e_{i}|^{2}\right)^{\frac{1}{2}} + \left(\int_{\omega_{i}}\kappa\chi_{i}^{2}|\nabla w|^{2}\right)^{\frac{1}{2}}\left(\int_{\omega_{i}}\kappa|\nabla e_{i}|^{2}\right)^{\frac{1}{2}}\nonumber \\
\leq & 2\left(\int_{\omega_{i}}\kappa w^{2}|\nabla\chi_{i}|^{2}+\int_{\omega_{i}}\kappa\chi_{i}^{2}|\nabla w|^{2}\right)^{\frac{1}{2}}\left(\int_{\omega_{i}}\kappa|\nabla e_{i}|^{2}\right)^{\frac{1}{2}}.\label{eq:estimate loc H^=00007B-1=00007D norm 1}
\end{align}
Let
\[
D_{i}=\sup_{v\in H_{0}^{1}(\Omega)}\cfrac{\int_{\omega_{i}}\kappa|\nabla\chi_{i}|^{2}v^{2}+\int_{\omega_{i}}\kappa\chi_{i}^{2}|\nabla v|^{2}}{\int_{\omega_{i}}\kappa|\nabla v|^{2}+\int_{\omega_{i}}\kappa v{}^{2}}.
\]
From (\ref{eq:estimate loc H^=00007B-1=00007D norm 1}), we obtain
\[
\left|\int_{\omega_{i}}\chi_{i}\cfrac{\partial e_{i}}{\partial t}w\right|
\leq
2D_{i}^{\frac{1}{2}}\left(\int_{\omega_{i}}\kappa|\nabla w|^{2}+\int_{\omega_{i}}\kappa w^{2}\right)^{\frac{1}{2}}\left(\int_{\omega_{i}}\kappa|\nabla e_{i}|^{2}\right)^{\frac{1}{2}}.
\]
Therefore,
\begin{align}
\sum_{i}\left|\int_{\omega_i}\chi_{i}\cfrac{\partial e_{i}}{\partial t}w\right|
& \leq
2\sum_{i}D_{i}^{\frac{1}{2}}\left(\int_{\omega_{i}}\kappa|\nabla w|^{2}+\int_{\omega_{i}}\kappa w^{2}\right)^{\frac{1}{2}}\left(\int_{\omega_{i}}\kappa|\nabla e_{i}|^{2}\right)^{\frac{1}{2}}\nonumber \\
& \leq
2\left(\sum_{i}D_{i}(\int_{\omega_{i}}\kappa|\nabla w|^{2}+\int_{\omega_{i}}\kappa w{}^{2})\right)^{\frac{1}{2}}\left(\sum_{i}\int_{\omega_{i}}\kappa|\nabla e_{i}|^{2}\right)^{\frac{1}{2}}\nonumber \\
& \leq
2D^{\frac{1}{2}}M^{\frac{1}{2}}\left(\int_{\Omega}\kappa|\nabla w|^{2}+\int_{\Omega}\kappa w{}^{2}\right)^{\frac{1}{2}}\left(\sum_{i}\int_{\omega_{i}}\kappa|\nabla e_{i}|^{2}\right)^{\frac{1}{2}},\label{eq:estimate loc H^=00007B-1=00007D norm 2}
\end{align}
where $D=\max\{D_{i}\}$.
Combining (\ref{eq:H^=00007B-1=00007D norm global to local})
with (\ref{eq:estimate loc H^=00007B-1=00007D norm 2}), we have
\[
\int_{T_{n-1}}^{T_{n}}\|\cfrac{\partial(\sum_{i}\chi_{i}e_{i})}{\partial t}\|_{H^{-1}(\kappa,\Omega)}^{2}\leq4DM\sup_{w\in H^{1}(\Omega)}\cfrac{\left(\int_{\Omega}\kappa|\nabla w|^{2}+\int_{\Omega}\kappa w{}^{2}\right)}{\int_{\Omega}\kappa|\nabla w|^{2}}\left(\sum_{i}\int_{T_{n-1}}^{T_{n}}\int_{\omega_{i}}\kappa|\nabla e_{i}|^{2}\right).
\]
Let
\[
E=\sup_{w\in H_{0}^{1}(\Omega)}\cfrac{\int_{\Omega}\kappa|\nabla w|^{2}+\int_{\Omega}\kappa w{}^{2}}{\int_{\Omega}\kappa|\nabla w|^{2}},
\]
then we have
\begin{equation}
\int_{T_{n-1}}^{T_{n}}\|\cfrac{\partial(\sum_{i}\chi_{i}e_{i})}{\partial t}\|_{H^{-1}(\kappa,\Omega)}^{2}\leq4DME\left(\sum_{i}\int_{T_{n-1}}^{T_{n}}\int_{\omega_{i}}\kappa|\nabla e_{i}|^{2}\right).\label{eq:H^=00007B-1=00007D estimate}
\end{equation}

Now, we substitute (\ref{eq:H^=00007B-1=00007D estimate}) and (\ref{eq:V-norm estimate 3}) into (\ref{eq:W-norm estimate}), then we have
\begin{align}
\|\tilde{u}_{h}-P(\tilde{u}_{h})\|_{W^{(n)}}^{2}
& \preceq
M\sum_{i}\left(DE\int_{T_{n-1}}^{T_{n}}\int_{\omega_{i}}\kappa|\nabla e_{i}|^{2}
+\int_{T_{n-1}}^{T_{n}}\int_{\omega_{i}}\kappa|\nabla\chi_i|^{2}e_{i}^{2}
+\int_{\omega_{i}}\chi_{i}^{2}e_{i}^{2}(x,T_{n-1}^{+})\right)\nonumber\\
& \preceq
M\sum_{i}\left(DE\int_{T_{n-1}}^{T_{n}}\int_{\omega_{i}}\kappa|\nabla e_{i}|^{2}
+\int_{T_{n-1}}^{T_{n}}\int_{\omega_{i}}\kappa|\nabla\chi_i|^{2}e_{i}^{2}
+\int_{\omega_{i}}e_{i}^{2}(x,T_{n-1}^{+})\right).\label{eq:W-norm estimate 2}
\end{align}
Note that
\begin{align*}
\int_{T_{n-1}}^{T_{n}}\int_{\omega_{i}}\kappa|\nabla e_{i}|^{2}
& \leq \int_{T_{n-1}}^{T_{n}} \frac{1}{\min_{x\in\omega_i}\{|\chi_i^{+}(x)|^2\}} \int_{\omega_{i}}\kappa|\chi_i^{+}|^2|\nabla e_{i}|^{2}\\
& \leq \frac{1}{\min_{x\in\omega_i}\{|\chi_i^{+}(x)|^2\}} \int_{T_{n-1}}^{T_{n}}\int_{\omega_{i}^{+}}\kappa|\chi_i^{+}|^2|\nabla e_{i}^{+}|^{2}.
\end{align*}
Applying Lemma \ref{lem: Caccioppoli} for $\omega_{i}^{+}$ then implies
\begin{align}
\int_{T_{n-1}}^{T_{n}}\int_{\omega_{i}}\kappa|\nabla e_{i}|^{2}
& \leq \frac{1}{\min_{x\in\omega_i}\{|\chi_i^{+}(x)|^2\}} \left( \int_{T_{n-1}}^{T_{n}}\int_{\omega_{i}^{+}}\kappa|\nabla\chi_i^{+}|^{2}|e_{i}^{+}|^{2}
+ \int_{\omega_i^{+}}|\chi_i^{+}|^{2}|e_i^{+}(x,T_{n-1}^{+})|^{2} \right) \nonumber\\
& \leq F_i \left( \int_{T_{n-1}}^{T_{n}}\int_{\omega_{i}^{+}}\kappa|\nabla\chi_i^{+}|^{2}|e_{i}^{+}|^{2}
+ \int_{\omega_i^{+}}|e_i^{+}(x,T_{n-1}^{+})|^{2} \right), \label{eq:W-norm estimate 3}
\end{align}
where $F_i = \cfrac{1}{\min_{x\in\omega_i}\{|\chi_i^{+}(x)|^2\}}$. Substituting (\ref{eq:W-norm estimate 3}), (\ref{eq:oversample estimate 1}) and (\ref{eq:oversample estimate 2}) into (\ref{eq:W-norm estimate 2}) gives
\begin{align}
\|\tilde{u}_{h}-P(\tilde{u}_{h})\|_{W^{(n)}}^{2}
& \preceq
M\sum_{i}(DEF_i + 1) \left( \int_{T_{n-1}}^{T_{n}}\int_{\omega_{i}^{+}}\kappa|\nabla\chi_i^{+}|^{2}|e_{i}^{+}|^{2}
+ \int_{\omega_i^{+}}|e_i^{+}(x,T_{n-1}^{+})|^{2} \right) \nonumber\\
& \preceq
M(DEF+1)\sum_{i} \left( \int_{T_{n-1}}^{T_{n}}\int_{\omega_{i}^{+}}\widetilde{\kappa}^{+}(x,t)|e_{i}^{+}|^{2}
+ \int_{\omega_i^{+}}|e_i^{+}(x,T_{n-1}^{+})|^{2} \right),\label{eq:W-norm estimate 4}
\end{align}
where $F=\max\{F_{i}\}$.
Using the spectral problem, we have
\begin{equation}\label{eq:W-norm estimate 5}
\|\tilde{u}_{h}-P(\tilde{u}_{h})\|_{W^{(n)}}^{2}\preceq
M(DEF+1)\sum_{i}\left(\cfrac{1}{\lambda_{L_{i}+1}^{\omega_i^{+}}}\|\tilde{u}_{h,i}^{+}\|_{V^{(n)}(\omega_i^{+})}^{2}\right).
\end{equation}
Combine (\ref{eq:general estimate}), (\ref{eq:estimate inf}) and (\ref{eq:W-norm estimate 5}), and we finally obtain
\[
\|u_{h}-u_{H}\|_{V^{(n)}}^{2}\preceq
M(DEF+1)\sum_{i}\left(\frac{1}{\lambda_{L_{i}+1}^{\omega_i^{+}}}\|\tilde{u}_{h,i}^{+}\|_{V^{(n)}(\omega_i^{+})}^{2}\right)
+\|u_{h}-\tilde{u}_{h}\|_{W^{(n)}}^{2}+\|u_{h}-u_{H}\|_{V^{(n-1)}}^{2}.
\]

\end{proof}

\section{Numerical results. Offline GMsFEM.} \label{sect:NR1}

In this section, we present a number of representative numerical examples
to show the performance of the proposed method. In particular,
we solve Equation (\ref{eq:para PDE}) using the space-time GMsFEM to
validate the effectiveness of the proposed approaches. The space domain $\Omega$ is taken as the unit square $[0,1]\times[0,1]$ and is divided into $10\times10$ coarse blocks consisting of uniform squares. Each coarse block is then divided into $10\times10$ fine blocks consisting of uniform squares. That is, $\Omega$ is partitioned by $100\times100$ square fine-grid blocks. The whole time interval is $[0, 1.6]$ (i.e., $T = 1.6$) and is divided into two uniform coarse time intervals and each coarse time interval is then divided into $8$ fine time intervals. We also use a source term $f = 1$ and impose a continuous  initial condition $\beta(x,y)=\sin(\pi x)\sin(\pi y)$. We  employ three different high-contrast permeability fields $\kappa(x,t)$'s to examine our method, which will be shown in the following three cases separately. In each case, we first solve for $u_h$ from Equation (\ref{eq:fine problem}) to obtain  the fine-grid solution. Then we solve for the multiscale solution $u_H$ using the space-time GMsFEM. To compare the accuracy, we will use the following error quantities:
\begin{equation}\label{err_formulus}
 e_1 =\left( \frac{\int_0^{T} \|u_{H}(t)-u_{h}(t)\|^2_{L^2(\Omega)}}{\int_0^{T}\|u_{h}(t)\|^2_{L^2(\Omega)}} \right)^{1/2}, \qquad
 e_2 =\left( \frac{\int_0^{T} \int_{\Omega} \kappa |\nabla(u_{H}(t)-u_{h}(t))|^2}{\int_0^{T} \int_{\Omega} \kappa |\nabla u_{h}(t)|^2} \right)^{1/2}.
\end{equation}

Since we are using the technique of randomized oversampling in the computation of the snapshot space, we would like to introduce the concept of $\emph{snapshot ratio}$, which is calculated as the number of randomized snapshots divided by the number of the full snapshots on one coarse neighborhood $\omega_i$. Here, the number of the full snapshots refers to the number of functions $\delta_i(x,t)$ from Equation (\ref{eq:locSnap}). In the following experiment with $100\times100$ fine-grid mesh, this number of the full snapshots on each coarse neighborhood is calculated by $n_{\text{total}}^{\text{snap}}=21\times21+40\times8 =761$.

\subsection{High-contrast Permeability Field 1: High-contrast medium translated in time}\label{sect:media1}

We start with a high-contrast permeability field $\kappa(x,t)$, which is translated uniformly after every other fine time step. High-contrast permeability fields at the initial and final time steps are shown in Figure \ref{Figure:UniCoeff}. Next, we consider applying the space-time GMsFEM to  Equation (\ref{eq:para PDE}) and solve for the multiscale solution $u_H$. Recall the procedures that are described in the Section \ref{sect:GMsFEM}, where we need to construct the snapshot spaces in the first place. The number of local offline basis that will be used in each $\omega_i$, denoted by $L_i$, and the buffer number $p_{\text{bf}}$ needs to be chosen in advance since they determine how many local snapshots are used. Then we can construct the lower dimensional offline space by performing space reduction on the snapshot space. In our experiments, we use the same  buffer number and the same number of local offline basis for all coarse neighborhood $\omega_i$'s.

\begin{figure}[tbp]
\centering
\begin{minipage}[t]{0.4\linewidth}
\centering
\includegraphics[width=\columnwidth]{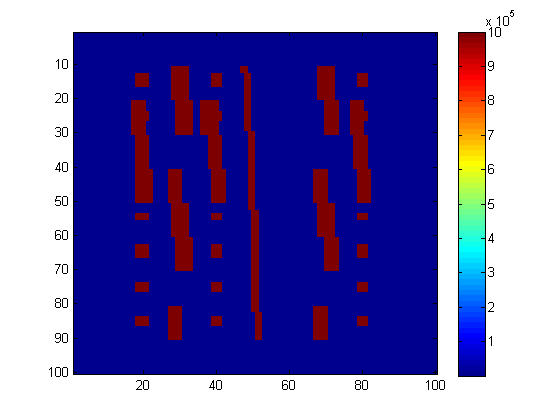}
\end{minipage}
\begin{minipage}[t]{0.4\linewidth}
\centering
\includegraphics[width=\columnwidth]{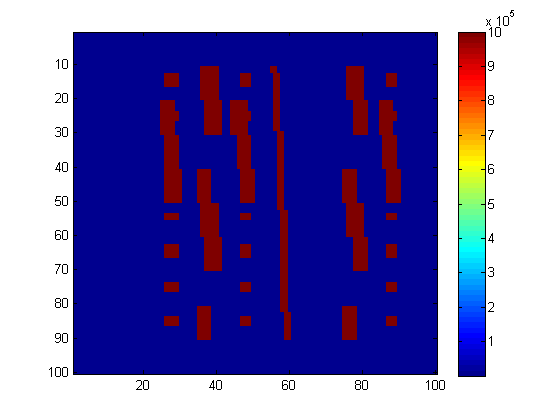}
\end{minipage}
\caption{High-contrast Permeability Field 1. Left: the permeability at
the initial time. Right: the permeability at the final time.}
\label{Figure:UniCoeff}
\end{figure}

First, we fix $L_i = 11$ for all $\omega_i$'s and examine the influences of various buffer numbers on the solution errors $e_1$ and $e_2$. The results are displayed in the left table of Table \ref{Table:test bf Li 1}. It is observed that when increasing the buffer numbers, one can get more accurate solutions, which is as expected. But the error decays very slowly, which indicates that using different buffer numbers doesn't affect the convergence rate too much. Based on this observation, it is not necessary to choose a large buffer number in order to improve convergence rate. Then we consider the choice of $L_i$, the number of eigenbasis in a neighborhood. With the fixed buffer number $p_{\text{bf}}=8$, we examine the convergence behaviors of using different $L_i$'s. Relative errors of multiscale solutions are shown in the right table of Table \ref{Table:test bf Li 1}. We observe that with a fixed buffer number, the relative errors are decreasing as using more offline basis. To see a more quantitative relationship between the relative errors and the values of $L_i$ as well as being inspired by the result in Theorem \ref{main thm}, we inspect the values of $1/\Lambda_{*}$ and the corresponding squared errors (see Table \ref{Table:minlambda} and Figure \ref{Figure:LambdaVSError}), where $\Lambda_{*}=\min_{\omega_i} \lambda_{L_i + 1}^{\omega_i}$ and $\{\lambda_j^{\omega_i}\}$ are the eigenvalues associated with the eigenbasis computed by spectral problem (\ref{eq:eig-problem}) in each $\omega_i$.
We note that when plotting Figure \ref{Figure:LambdaVSError}, we don't use the values of case $L_i=2$, because in this case as in the case with one basis function
per node, the method does not converge as we do not have sufficient
number of basis functions.
We note that the two curves in Figure \ref{Figure:LambdaVSError} track each other somewhat closely. This indicates that $1/\Lambda_{*}$'s and $e_2^2$'s are
 correlated and we calculate for the correlation coefficient to be
$corrcoef(1/\Lambda_{*}, e_2^2)= 0.9778.$

Observing the dimensions of the offline spaces $V_{\text{off}}$, one can see that compared with the traditional fine-scale finite element method, the proposed space-time GMsFEM uses much fewer degrees of freedom while achieving an accurate solution. Also, by inspecting the snapshot ratios, one can see that the use of randomization can reduce the dimension of snapshot spaces substantially. We would like to comment that oversampling technique is necessary for the randomization. For example, in the case $L_i=6$ and $p_{\text{bf}}=8$, if without oversampling the errors $e_1$ and $e_2$ are $11.19\%$ and $88.42\%$, respectively, which are worse than the errors obtained with oversampling.

\begin{table}[H]
\centering
 \begin{minipage}[t]{0.4\linewidth}
 \centering
  \begin{tabular}{ | c | c | c | c |  }
    \hline
    $p_{\text{bf}}$  &Snapshot ratio  &$e_1$  &$e_2$  \\
    \hline
    1   & 0.0158     & 6.18\%    & 53.90\%   \\
    4   & 0.0197     & 5.66\%    & 48.04\%   \\
    8   & 0.0250     & 5.17\%    & 45.86\%   \\
    12  & 0.0302     & 5.16\%    & 43.83\%   \\
    20  & 0.0407     & 4.71\%    & 41.14\%   \\
    30  & 0.0539     & 4.35\%    & 38.68\%   \\
    40  & 0.0670     & 4.23\%    & 37.60\%   \\
    \hline
  \end{tabular}
  \end{minipage}
  \begin{minipage}[t]{0.5\linewidth}
  \begin{tabular}{ | c | c | c | c | c |  }
    \hline
    $L_i$  &$dim(V_{\text{off}})$   &Snapshot ratio  &$e_1$  &$e_2$  \\
    \hline
    2     &162   & 0.0131   &17.03\%    & 129.14\%   \\
    6     &486   & 0.0184   &8.11\%    & 62.59\%  \\
    10    &810   & 0.0237   &6.97\%    & 54.85\%   \\
    20    &1620  & 0.0368   &4.81\%    & 41.18\%   \\
    30    &2430  & 0.0499   &3.29\%    & 31.64\%   \\
    40    &3240  & 0.0631   &2.28\%    & 24.43\%    \\
    50    &4050  & 0.0762   &1.54\%    & 18.45\%   \\
    \hline
  \end{tabular}
  \end{minipage}
\caption{First permeability field. Left: errors with the fixed number of offline basis $L_i=11$. Right: errors with the fixed buffer number $p_{\text{bf}}=8$. }
\label{Table:test bf Li 1}
\end{table}

\begin{table}[H]
  \centering
  \begin{tabular}{ | c | c |c|c|  }
    \hline
    $L_i$    &$1/\Lambda_{*}$  &$e_1^2$  &$e_2^2$ \\
    \hline \hline
    2      &0.2734    &2.90\%     &166.78\% \\
    6      &0.0120    &0.66\%     &39.17\%  \\
    10      &0.0085   &0.49\%     &30.08\%  \\
    20      &0.0061   &0.23\%     &16.96\%  \\
    30      &0.0053   &0.11\%     &10.01\% \\
    40      &0.0048   &0.05\%     & 5.97\%  \\
    50      &0.0042   &0.02\%     & 3.40\% \\
    \hline
  \end{tabular}
\caption{ $1/\Lambda_{*}$ values and errors.}
\label{Table:minlambda}
\end{table}

\begin{figure}[H]
\centering
\begin{minipage}[t]{0.4\linewidth}
\centering
\includegraphics[width=\columnwidth]{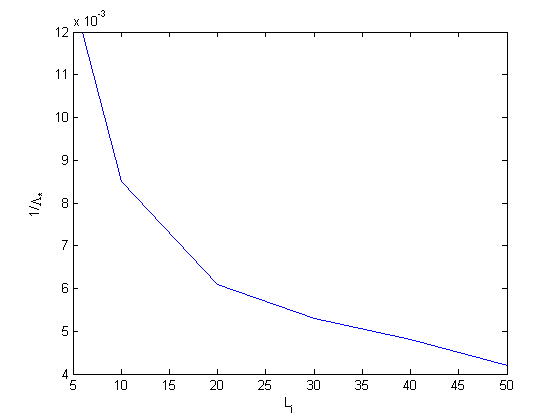}
\end{minipage}
\begin{minipage}[t]{0.4\linewidth}
\centering
\includegraphics[width=\columnwidth]{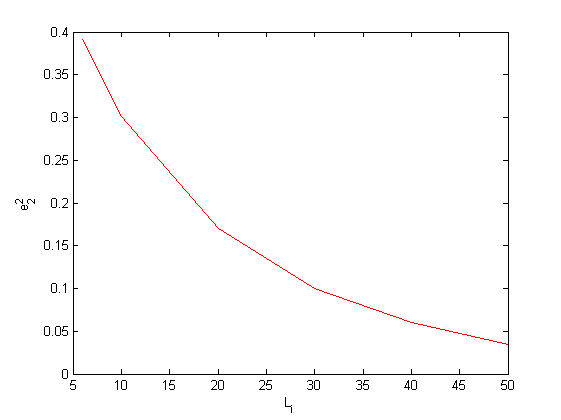}
\end{minipage}
\caption{Left: $1/\Lambda_{*}$ vs $L_i$; Right: $e_2^2$ vs $L_i$.}
\label{Figure:LambdaVSError}
\end{figure}

\subsection{High-contrast Permeability Field 2: Four channels translated in time}

In this subsection, we consider a more structured high-contrast permeability field $\kappa(x,t)$, which has four channels inside and these four channels are translated uniformly in time. High-contrast permeability fields at the initial and final time steps are shown in Figure \ref{Figure:4chanlsTran}. We repeat our steps from the previous example by fixing $L_i$ and $p_{\text{bf}}$, separately. The results are shown in Table \ref{Table:test bf Li 2}. One can still observe that increasing the buffer numbers will slowly reduce the relative errors and with a fixed buffer number, the relative errors are decreasing as adding more offline basis. Using a similar approach, we can also get the cross-correlation coefficient between $e_2^2$ and $1/\Lambda_{*}$, which is $0.9863$. This  suggests a linear relationship between $e_2^2$ and $1/\Lambda_{*}$ and verifies Theorem \ref{main thm}.

\begin{figure}[H]
\centering
\begin{minipage}[t]{0.4\linewidth}
\centering
\includegraphics[width=\columnwidth]{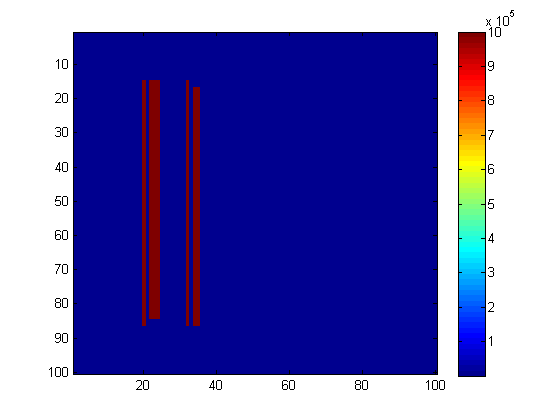}
\end{minipage}
\begin{minipage}[t]{0.4\linewidth}
\centering
\includegraphics[width=\columnwidth]{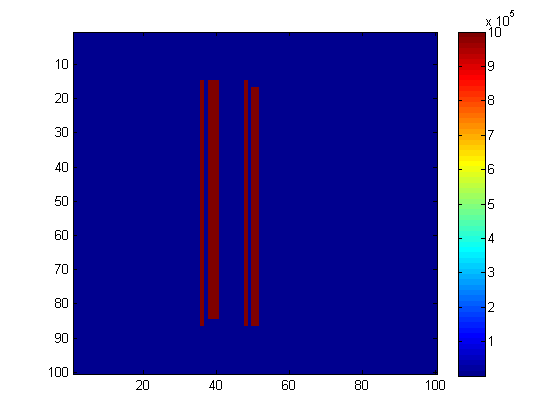}
\end{minipage}
\caption{High-contrast Permeability Field 2. Left: the permeability at the initial time. Right: the permeability at the final time.}
\label{Figure:4chanlsTran}
\end{figure}

\begin{table}[H]

 \begin{minipage}[t]{0.4\linewidth}
 \centering
  \begin{tabular}{ | c | c | c | c |  }
    \hline
    $p_{\text{bf}}$  &Snapshot ratio  &$e_1$  &$e_2$  \\
    \hline
    1   & 0.0158     & 7.42\%    & 61.87\%   \\
    4   & 0.0197     & 7.30\%    & 58.95\%   \\
    8   & 0.0250     & 7.14\%    & 57.30\%   \\
    12  & 0.0302     & 7.00\%    & 54.01\%   \\
    20  & 0.0407     & 6.81\%    & 50.85\%   \\
    30  & 0.0539     & 6.61\%    & 49.30\%   \\
    40  & 0.0670     & 6.43\%    & 48.26\%   \\
    \hline
  \end{tabular}
  \end{minipage}
  \begin{minipage}[t]{0.5\linewidth}
  \begin{tabular}{ | c | c | c | c | c |  }
    \hline
    $L_i$  &$dim(V_{\text{off}})$   &Snapshot ratio  &$e_1$  &$e_2$  \\
    \hline
    2     &162   & 0.0131   &11.91\%    & 104.95\%   \\
    6     &486   & 0.0184   &8.33\%    & 70.82\%  \\
    10    &810   & 0.0237   &7.25\%    & 58.25\%   \\
    20    &1620  & 0.0368   &5.67\%    & 43.10\%   \\
    30    &2430  & 0.0499   &3.90\%    & 32.75\%   \\
    40    &3240  & 0.0631   &2.73\%    & 27.08\%    \\
    50    &4050  & 0.0762   &1.86\%    & 20.70\%   \\
    \hline
  \end{tabular}
  \end{minipage}
\caption{Second permeability field. Left: errors with the fixed number of offline basis $L_i=11$. Right: errors with the fixed buffer number $p_{\text{bf}}=8$. }
\label{Table:test bf Li 2}
\end{table}

\subsection{High-contrast Permeability Field 3: Four channels rotated in time}

In the third example, we consider another structured high-contrast permeability field $\kappa(x,t)$ which has four channels inside and these four channels are rotated anticlockwise around the center by $11.25$ degrees after each fine time step. High contrast permeability fields at the initial time step is shown in Figure \ref{Figure:4chanlsRota}. We repeat the same procedures as in the previous two examples. The results are shown in Table \ref{Table:test bf Li 3} and one can draw similar conclusions as before. The cross-correlation coefficient between $e_2^2$ and $1/\Lambda_{*}$ is calculated as $0.9959$. This shows a linear relationship between $e_2^2$ and $1/\Lambda_{*}$ (see Theorem \ref{main thm}).

\begin{figure}[H]
\centering
\includegraphics[width=7cm]{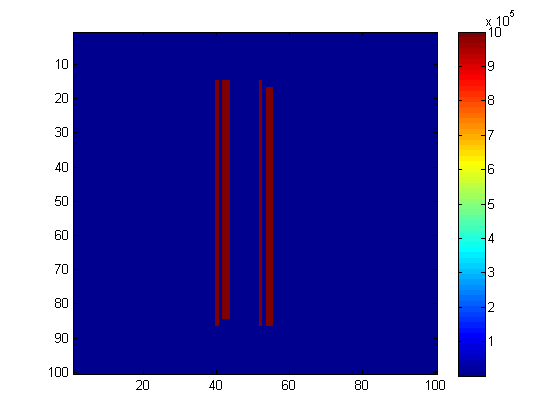}
\caption{High-contrast Permeability Field 3 at the initial time.}
\label{Figure:4chanlsRota}
\end{figure}

\begin{table}[H]

 \begin{minipage}[t]{0.4\linewidth}
 \centering
  \begin{tabular}{ | c | c | c | c |  }
    \hline
    $p_{\text{bf}}$  &Snapshot ratio  &$e_1$  &$e_2$  \\
    \hline
    1   & 0.0158     & 8.68\%    & 72.86\%   \\
    4   & 0.0197     & 8.67\%    & 71.67\%   \\
    8   & 0.0250     & 8.56\%    & 71.42\%   \\
    12  & 0.0302     & 8.44\%    & 68.87\%   \\
    20  & 0.0407     & 8.18\%    & 65.88\%   \\
    30  & 0.0539     & 7.96\%    & 61.56\%   \\
    40  & 0.0670     & 7.58\%    & 57.58\%   \\
    \hline
  \end{tabular}
  \end{minipage}
  \begin{minipage}[t]{0.5\linewidth}
  \begin{tabular}{ | c | c | c | c | c |  }
    \hline
    $L_i$  &$dim(V_{\text{off}})$   &Snapshot ratio  &$e_1$  &$e_2$  \\
    \hline
    2     &162   & 0.0131   &10.41\%    & 109.40\%   \\
    6     &486   & 0.0184   &9.40\%    & 83.60\%  \\
    10    &810   & 0.0237   &8.63\%    & 70.84\%   \\
    20    &1620  & 0.0368   &7.42\%    & 57.66\%   \\
    30    &2430  & 0.0499   &6.14\%    & 47.78\%   \\
    40    &3240  & 0.0631   &4.75\%    & 39.89\%    \\
    50    &4050  & 0.0762   &3.29\%    & 30.11\%   \\
    \hline
  \end{tabular}
  \end{minipage}
\caption{Third permeability field. Left: errors with the fixed number of offline basis $L_i=11$. Right: errors with the fixed buffer number $p_{\text{bf}}=8$. }
\label{Table:test bf Li 3}
\end{table}

\section{Residual based online adaptive procedure}\label{sect:online}

As we observe in the previous examples, the offline errors do not decrease rapidly after several multiscale functions are selected. In these cases, online basis functions can help to reduce the error and obtain an accurate approximation of the fine-scale solution \cite{Chung:2015:ROG:2837849.2838155}. The use of online basis functions gives a rapid convergence. Next, we  will derive a framework for the construction of online multiscale basis functions.

We use the index $m\geq1$ to represent the online enrichment level. At the enrichment level $m$, we use $V_{ms}^m$ to denote the corresponding space-time GMsFEM space and $u_{ms}^m$ the  corresponding solution obtained in (\ref{eq:space-time FEM coarse decoupled}). The sequence of functions $\{u_{ms}^m\}_{m\geq1}$ will converge to the fine-scale solution. We emphasize that the space $V_{ms}^m$ can contain both offline and online basis functions, and define $V_{ms}^0 = V_{\text{off}}$. We will construct a strategy for getting the space $V_{ms}^{m+1}$ from $V_{ms}^m$.

Next we present a framework for the construction of online basis functions. By online basis
functions, we mean basis functions that are computed during the iterative process using the residual. This is the contrary to offline basis functions that are computed before the iterative process. The online basis functions for enrichment level $m+1$ are computed based on some local residuals for the multiscale solution $u_{ms}^m$. Thus, we see that some offline basis functions are necessary for the computations of online basis functions. In our numerical examples from the following section, we will also see how many offline basis functions are needed in order to obtain a rapidly converging sequence of solutions.

For brevity, we denote the left hand side of (\ref{eq:space-time FEM coarse decoupled}) by $a(u_{ms}^{(n)},v)$ and the right hand side $F(v)$. That is, the solution $u_{ms} = \oplus_{n=1}^{N}u_{ms}^{(n)}$ where $u_{ms}^{(n)}$ satisfies
\[
a(u_{ms}^{(n)},v) = F(v), \quad \forall v\in V_{H}^{(n)}.
\]
Consider a given coarse neighborhood $\omega_i$. Suppose that at the enrichment level $m$, we need to add an online basis function $\phi\in V_h$ in $\omega_i$. Then the required $\phi= \oplus_{n=1}^{N}\phi^{(n)}$ satisfies that $\phi^{(n)}$ is the solution of
\[
a(\phi^{(n)},v) = R^{(n)}(v), \quad \forall v\in V_{h},
\]
where $R^{(n)}(v)= F(v) - a(u_{ms}^{m(n)},v)$ is the online residual at the coarse time interval $[T_{n-1},T_{n}]$.

In the following, we would like to form a residual based online algorithm in each coarse time interval $[T_{n-1},T_{n}]$, see Algorithm \ref{online_algorithm}. For simplicity, we will omit the time index $(n)$ on the spaces and solutions in this description. We consider enrichment on non-overlapping coarse neighborhoods. Thus, we divide the $\{\omega_i\}_{i=1}^{N_c}$ into $P$ non-overlapping groups and denote each group by $\{\omega_i\}_{i\in I_p}$, $p=1,...,P$. We denote by $M$ the number of online iterations.

\begin{algorithm}[htb] \caption{Residual based online algorithm} \label{online_algorithm}
\begin{algorithmic}[1]

\State \textbf{Initialization:} Offline space $V_{ms}^0 = V_{\text{off}}$, offline solution $u_{ms}^0 = u_{\text{H}}$.

\medskip

\For{$m=0$ to $M$:}
    \For{$p=1$ to $P$}
        \State (1) On each $\omega_i (i\in I_p)$, compute residual $R^{m}(v) = a(u_{ms}^{m},v) - F(v),\quad v\in V_{h}$.

        \State (2) For each $i$, solve $a(\phi_i,v) = R^{m}(v), \quad \forall v\in V_{h}$.

        \State (3) Set $V_{ms}^m$ = $V_{ms}^m\cup\{\phi_i | i\in I_p\}$. \label{online_adding}

        \State (4) Solve for a new $u_{ms}^m \in V_{ms}^m$ satisfying $a(u_{ms}^m,v) = F(v), \quad \forall v\in V_{ms}^m$.
    \EndFor
    \State Set $V_{ms}^{m+1}$ = $V_{ms}^m$, and $u_{ms}^{m+1}$ = $u_{ms}^m$.
\EndFor

\end{algorithmic}
\end{algorithm}

To further improve the convergence and efficiency of the online method, we can adopt an online adaptive procedure. In this adaptive approach, the online enrichment is performed for coarse neighborhoods that have a cumulative residual that is $\theta$ fraction of the total residual. More precisely, assume that the $V^{(n)}$ norm of local residuals on $\{\omega_i|i\in I_p\}$, denoted by $\{r_i|i\in I_p\}$, are arranged so that
\[
r_{p_1} \geq r_{p_2} \geq r_{p_3} \geq \cdot\cdot\cdot \geq r_{p_J},
\]
where we suppose $I_p = \{p_1, p_2, p_3,\cdot\cdot\cdot, p_J\}$. Instead of adding $\{\phi_i | i\in I_p\}$ into $V_{ms}^m$ at step \ref{online_adding} in Algorithm \ref{online_algorithm}, we only add the basis $\{\phi_1,\cdot\cdot\cdot,\phi_k\}$ for the corresponding coarse neighborhoods such that $k$ is the smallest integer satisfying
\[
\Sigma_{i=1}^{k}r_{p_i}^2 \geq \theta\Sigma_{i=1}^{J}r_{p_i}^2.
\]
In the examples below, we will see that the proposed adaptive procedure gives a better convergence and is more efficient.

\section{Numerical results. Online GMsFEM}\label{sect:NR2}

In this section, we present numerical examples to demonstrate the performance of the proposed online method in solving Equation (\ref{eq:para PDE}). To implement the space-time online GMsFEM, we will first choose a fixed number of offline basis functions for every coarse neighborhood, and calculate the resulting offline space $V_{\text{off}}$. Then we conduct the online process by following Algorithm \ref{online_algorithm}. In this experiment, we use the same space-time domain and mesh (coarse and fine), the same source term $f$ and initial condition $\beta(x,y)$, the same definitions of relative errors $e_1$ and $e_2$, as in Section \ref{sect:NR1}. The permeability field $\kappa(x,t)$ is chosen as the high-contrast permeability field 1 from Section \ref{sect:media1}. The buffer number in the computation of snapshot space is chosen to be 8.

First, we implement the space-time online GMsFEM by choosing different numbers of offline basis functions ($L_i = 1,2,3,4,5$) on every coarse neighborhood. The relative errors of online solutions are presented in Table \ref{L2ErrTable_online} and Table \ref{H1ErrTable_online}. Note that in the first column, we show the number of basis functions used for each coarse neighborhood $\omega_i$, and the degrees of freedom (DOF) of multiscale space on each coarse time interval which are the numbers in parentheses, after online enrichment. For example, $2(162)$ in the first column means that after online enrichment, $2$ multiscale basis are used on each $\omega_i$ and the DOF of multiscale space on each coarse time interval is $162$. And if we initially choose $L_i=1$, then it means $1$ online iteration is performed, which add $1$ online basis to each $\omega_i$. If $L_i=2$ initially, then it means we do not perform any online iteration and  $2$ multiscale basis are offline basis functions. By observing each column, one can see that the errors decay fast with more online iterations being performed. This is observed for both $e_1$ and $e_2$ when $L_i\geq4$. This suggests that in this specific setting, we can get a fast online convergence with $4$ offline basis chosen on each $\omega_i$. After a small number of online iterations, the relative errors decrease to  a significantly small level. We consider reducing the high contrast of the permeability field $\kappa(x,t)$ from $10^6$ to $100$. Then we look at the relative errors of online multiscale solutions (see Table \ref{L2ErrTable_online2} and Table \ref{H1ErrTable_online2}). The same phenomena can be observed except that the fast online convergence rate can be achieved for any choice of $L_i$. This implies that the number of offline basis functions used to guarantee a fast online convergence rate is  related to the high contrast of the permeability field.

\begin{table}[H]
  \centering
  \begin{tabular}{ | c | c | c | c | c | c |}
    \hline
    $DOF$  &$e_1$($L_i=1$)  &$e_1$($L_i=2$)  &$e_1$($L_i=3$)  &$e_1$($L_i=4$) &$e_1$($L_i=5$)\\
    \hline \hline
    1(81)   & 97.57\%     &-            & -           & -            & -                  \\
    \hline
    2(162)   & 93.20\%      &96.71\%      & -           & -            & -                 \\
    \hline
    3(243)   & 44.24\%      &23.22\%       & 21.27\%      & -            & -                \\
    \hline
    4(324)   & 15.37\%      &6.53\%    & 7.17e-1\%   & 10.20\%       & -                \\
    \hline
    5(405)   & 8.65\%      &3.69\%    & 2.06e-1\%   & 2.58e-1\%     &5.20\%            \\
    \hline
    6(486)   & 5.15\%    &1.71\%     & 5.41e-2\%    & 1.75e-2\%      &1.06e-1\%           \\
    \hline
    7(567)   & 2.58\%    &3.11e-1\%    & 5.54e-3\%   & 6.12e-4\%     &2.99e-3\%           \\
    \hline
  \end{tabular}
\caption{Relative online errors $e_1$, with the different numbers of offline basis functions. High contrast = $10^6$.}
\label{L2ErrTable_online}
\end{table}

\begin{table}[H]
  \centering
  \begin{tabular}{ | c | c | c | c | c | c |}
    \hline
    $DOF$  &$e_2$($L_i=1$)  &$e_2$($L_i=2$)  &$e_2$($L_i=3$)  &$e_2$($L_i=4$) &$e_2$($L_i=5$)\\
    \hline \hline
    1(81)   & 138\%        &-            & -           & -            & -                  \\
    \hline
    2(162)   & 113\%      &114\%       & -             & -            & -                 \\
    \hline
    3(243)   & 84.93\%    &139\%      & 104\%         & -            & -                \\
    \hline
    4(324)   & 82.48\%   &82.08\%       & 11.43\%        & 73.50\%       & -                \\
    \hline
    5(405)   & 69.15\%   &51.13\%       & 3.29\%      & 4.78\%        &48.26\%            \\
    \hline
    6(486)   & 51.17\%   &34.00\%       & 1.01\%      & 3.53e-1\%     &1.86\%           \\
    \hline
    7(567)   & 37.93\%    &7.81\%    & 1.05e-1\%      & 9.89e-3\%     &4.75e-2\%           \\
    \hline
  \end{tabular}
\caption{Relative online errors $e_2$, with the different numbers of offline basis functions. High contrast = $10^6$.}
\label{H1ErrTable_online}
\end{table}

\begin{table}[H]
  \centering
  \begin{tabular}{ | c | c | c | c | c | c |}
    \hline
    $DOF$  &$e_1$(1 basis)  &$e_1$(2 basis)  &$e_1$(3 basis)  &$e_1$(4 basis) &$e_1$(5 basis)\\
    \hline \hline
    1(81)   & 19.28\%     &-            & -           & -            & -                  \\
    \hline
    2(162)   & 1.97\%      &13.03\%      & -           & -            & -                 \\
    \hline
    3(243)   & 2.81e-1\%   &9.81e-1\%    & 9.27\%      & -            & -                \\
    \hline
    4(324)   & 3.48e-2\%   &1.24e-1\%    & 2.23e-1\%   & 8.34\%       & -                \\
    \hline
    5(405)   & 1.89e-3\%   &1.11e-2\%    & 9.70e-2\%   & 2.09e-1\%     &7.38\%            \\
    \hline
    6(486)   & 2.67e-5\%   &1.33e-4\%    & 2.07e-4\%   & 8.71e-3\%     &1.56e-1\%           \\
    \hline
    7(567)   & 2.51e-7\%   &9.32e-7\%    & 1.45e-6\%   & 1.16e-4\%     &8.62e-3\%           \\
    \hline
  \end{tabular}
\caption{Relative online errors $e_1$, with the different numbers of offline basis functions. High contrast = $100$.}
\label{L2ErrTable_online2}
\end{table}

\begin{table}[H]
  \centering
  \begin{tabular}{ | c | c | c | c | c | c |}
    \hline
    $DOF$  &$e_2$(1 basis)  &$e_2$(2 basis)  &$e_2$(3 basis)  &$e_2$(4 basis) &$e_2$(5 basis)\\
    \hline \hline
    1(81)   & 219\%        &-            & -           & -            & -                  \\
    \hline
    2(162)   & 14.75\%      &123\%       & -             & -            & -                 \\
    \hline
    3(243)   & 3.35\%       &8.37\%      & 81.80\%         & -            & -                \\
    \hline
    4(324)   & 4.03e-1\%   &1.11\%       & 2.63\%         & 67.86\%       & -                \\
    \hline
    5(405)   & 2.11e-2\%   &1.01e-1\%    & 1.68e-1\%      & 2.29\%        &59.93\%            \\
    \hline
    6(486)   & 5.61e-4\%   &1.64e-3\%    & 3.71e-3\%      & 1.35e-1\%     &1.77\%           \\
    \hline
    7(567)   & 4.57e-6\%   &1.72e-5\%    & 2.29e-5\%      & 2.08e-3\%     &1.41e-1\%           \\
    \hline
  \end{tabular}
\caption{Relative online errors $e_2$ with the different numbers of offline basis functions. High contrast = $100$.}
\label{H1ErrTable_online2}
\end{table}

Next, we perform online adaptive basis construction procedure with $\theta = 0.7$. The numerical results for using $3$, $4$, and $5$ offline basis per coarse neighborhood are shown in Table \ref{Table:AdapUniMedia}. Notice that "$M1+M2$" in the DOF columns means $M1$ degrees of freedom are used on the first coarse time interval and $M2$ degrees of freedom on the second coarse time interval. To compare the behaviors of online processes with and without adaptivity, we plot out the log values of $e_2$ against DOFs. See Figure \ref{Fig:AdapUniMedia}. We observe that to achieve a certain error, fewer online basis functions are needed with adaptivity. This indicates that the proposed adaptive procedure gives us better convergence and is more efficient.

\begin{table}[H]
  \centering
  \begin{tabular}{ || c | c || c | c || c | c ||}
    \hline
    \multicolumn{2}{||c||}{3 offline basis}  &\multicolumn{2}{c||}{4 offline basis}  &\multicolumn{2}{c||}{5 offline basis}\\
    \hline
    DOF     & $e_2$    & DOF     & $e_2$    & DOF     & $e_2$\\
    \hline
    243+243     & 104\%     &324+324      &73.50\%      & 405+405   &48.26\%  \\
    \hline
    323+322     & 10.57\%   &399+401      &3.56\%      & 471+473   &1.95\%\\
    \hline
    403+392     & 1.49\%    &468+466     &2.13e-1\%    &533+536   &1.21e-1\% \\
    \hline
    480+465     & 9.81e-2\%   &541+529    &1.03e-2\%    &599+603   &6.81e-3\%\\
    \hline
    552+533     & 4.24e-3\%   &611+601    &5.00e-4\%    &670+669   &3.41e-4\%\\
    \hline
  \end{tabular}
\caption{Relative online adaptive errors $e_2$ with different numbers of offline basis functions.}
\label{Table:AdapUniMedia}
\end{table}

\begin{figure}[H]
\begin{minipage}[t]{0.32\linewidth}
\centering
\includegraphics[width=\columnwidth]{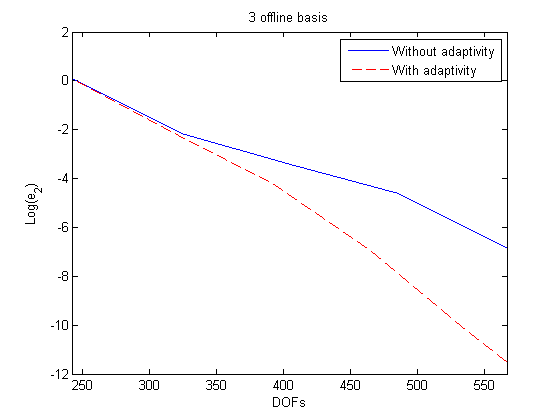}
\end{minipage}
\begin{minipage}[t]{0.32\linewidth}
\centering
\includegraphics[width=\columnwidth]{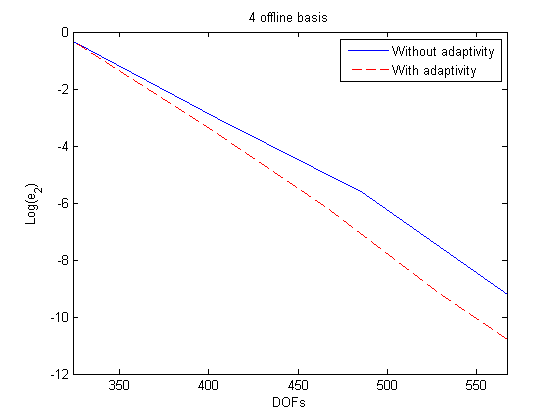}
\end{minipage}
\begin{minipage}[t]{0.32\linewidth}
\centering
\includegraphics[width=\columnwidth]{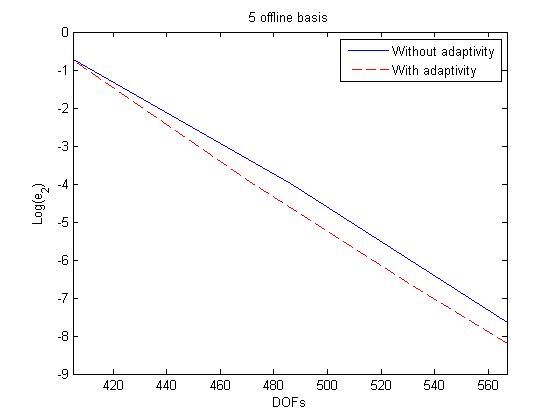}
\end{minipage}
\caption{Adaptivity v.s. no adaptivity.}
\label{Fig:AdapUniMedia}
\end{figure}

\section{Conclusion}\label{sect:conclusion}

In this paper, we consider the construction of the space-time GMsFEM
to solve space-time heterogeneous parabolic equations.
The main ingredients of our approach are (1) the construction of space-time
snapshot vectors, (2) the local spectral decomposition in the snapshot space.
To construct the snapshot vectors, we solve local problems in local space-time
domains. A complete snapshot space will consist of the use of all possible boundary
and initial conditions. However, this can result to very large computational
cost and a high dimensional snapshot space. For this reason, we compute a number of randomized snapshot vectors. In fact, the
number of snapshot vectors is slightly larger than that of the multiscale basis functions used in the simulations. To perform local spectral decomposition, we discuss a couple of choices for local eigenvalue problems motivated by the analysis. We present a convergence analysis of the proposed method. Several numerical examples are presented. In particular, we consider examples where the space-time permeability fields have high contrast and these high-conductivity regions move in the space. If only spatial multiscale basis functions are used, it will require a large dimensional space. Thanks to the space-time multiscale space, we can approximate the problem with a fewer degrees of freedom. Our numerical results show that one can obtain accurate solutions. We also discuss online procedures, where new multiscale basis functions are constructed using the residual. These basis functions are computed in each local space-time domain. Using online basis functions adaptively, one can reduce the error substantially at a cost of online computations.

In this paper, our main objective is to develop systematic
multiscale model reduction techniques
in space-time cells by constructing local (in space-time)
multiscale basis functions. The proposed concepts can be used
for other applications, where one needs space-time multiscale basis
functions.

\section{Appendix}
\subsection{Proof of Lemma \ref{lem: cea lemma}}

\begin{proof}
By the definition of $\|\cdot\|_{V^{(n)}},$
\begin{eqnarray}
\|u_{h}-u_{H}\|_{V^{(n)}}^{2} & = & \cfrac{1}{2}\int_{\Omega}(u_{h}-u_{H})^{2}|_{t=T_{n}^{-}}+\cfrac{1}{2}\int_{\Omega}(u_{h}-u_{H})^{2}|_{t=T_{n-1}^{+}}+\int_{T_{n-1}}^{T_{n}}\int_{\Omega}\kappa|\nabla(u_{h}-u_{H})|^{2}\nonumber
\\
& = & \cfrac{1}{2}\int_{T_{n-1}}^{T_{n}}\int_{\Omega}\cfrac{\partial}{\partial t}(u_{h}-u_{H})^{2}+\int_{\Omega}(u_{h}-u_{H})^{2}|_{t=T_{n-1}^{+}}+\int_{T_{n-1}}^{T_{n}}\int_{\Omega}\kappa|\nabla(u_{h}-u_{H})|^{2}\nonumber \\
 & = & \int_{T_{n-1}}^{T_{n}}\int_{\Omega}\cfrac{\partial(u_{h}-u_{H})}{\partial t}(u_{h}-u_{H})+\int_{\Omega}(u_{h}-u_{H})^{2}|_{t=T_{n-1}^{+}} \nonumber \\
 &  & + \int_{T_{n-1}}^{T_{n}}\int_{\Omega}\kappa|\nabla(u_{h}-u_{H})|^{2}\nonumber\\
 & = & \int_{T_{n-1}}^{T_{n}}\int_{\Omega}\cfrac{\partial(u_{h}-u_{H})}{\partial t}(u_{h}-w)
       + \int_{\Omega}(u_{h}-u_{H})(u_{h}-w)|_{t=T_{n-1}^{+}}\nonumber \\
 &  &  + \int_{T_{n-1}}^{T_{n}}\int_{\Omega}\kappa\nabla(u_{h}-u_{H})\cdot\nabla(u_{h}-w)
       + \int_{T_{n-1}}^{T_{n}}\int_{\Omega}\cfrac{\partial(u_{h}-u_{H})}{\partial t}(w-u_{H})\nonumber \\
 &  & + \int_{\Omega}(u_{h}-u_{H})(w-u_{H})|_{t=T_{n-1}^{+}}
      + \int_{T_{n-1}}^{T_{n}}\int_{\Omega}\kappa\nabla(u_{h}-u_{H})\cdot\nabla(w-u_{H})\label{eq:V norm equality}.
\end{eqnarray}
From (\ref{eq:space-time FEM coarse decoupled}) and the similar formulation for fine scale solution $u_h$, we have
\begin{align}
 & \int_{T_{n-1}}^{T_{n}}\int_{\Omega}\cfrac{\partial(u_{h}-u_{H})}{\partial t}v+\int_{T_{n-1}}^{T_{n}}\int_{\Omega}\kappa\nabla(u_{h}-u_{H})\cdot\nabla v+\int_{\Omega}(u_{h}-u_{H})v|_{t=T_{n-1}^{+}}\nonumber \\
= & \int_{\Omega}\left(g_{h}^{(n)}-g_{H}^{(n)}\right)v(x,T_{n-1}^{+}),\;\forall v\in V_{H}^{(n)}.\label{eq:difference of u_h and u_H}
\end{align}
Therefore, taking $v=w-u_{H}$ and combining the equation (\ref{eq:V norm equality})
and (\ref{eq:difference of u_h and u_H}), we obtain
\begin{eqnarray*}
\|u_{h}-u_{H}\|_{V^{(n)}}^{2} & = & \int_{T_{n-1}}^{T_{n}}\int_{\Omega}\cfrac{\partial(u_{h}-u_{H})}{\partial t}(u_{h}-w)+\int_{\Omega}(u_{h}-u_{H})(u_{h}-w)|_{t=T_{n-1}^{+}}\\
 &  & +\int_{T_{n-1}}^{T_{n}}\int_{\Omega}\kappa\nabla(u_{h}-u_{H})\cdot\nabla(u_{h}-w) +\int_{\Omega}\left(g_{h}^{(n)}-g_{H}^{(n)}\right)(w-u_{H})|_{t=T_{n-1}^{+}}.
\end{eqnarray*}
Using integration by parts, we have
\begin{align*}
& \int_{T_{n-1}}^{T_{n}}\int_{\Omega}\cfrac{\partial(u_{h}-u_{H})}{\partial t}(u_{h}-w)+\int_{\Omega}(u_{h}-u_{H})(u_{h}-w)|_{t=T_{n-1}^{+}}\\
= & -\int_{T_{n-1}}^{T_{n}}\int_{\Omega}\cfrac{\partial(u_{h}-w)}{\partial t}(u_{h}-u_{H})+\int_{\Omega}(u_{h}-u_{H})(u_{h}-w)|_{t=T_{n}^{-}}.
\end{align*}
Thus,
\begin{eqnarray*}
\|u_{h}-u_{H}\|_{V^{(n)}}^{2}
& = & -\int_{T_{n-1}}^{T_{n}}\int_{\Omega}\cfrac{\partial(u_{h}-w)}{\partial t}(u_{h}-u_{H})
     +\int_{\Omega}(u_{h}-u_{H})(u_{h}-w)|_{t=T_{n}^{-}}\\
 &  & +\int_{T_{n-1}}^{T_{n}}\int_{\Omega}\kappa\nabla(u_{h}-u_{H})\cdot\nabla(u_{h}-w)
      +\int_{\Omega}\left(g_{h}^{(n)}-g_{H}^{(n)}\right)(u_{h}-u_{H})|_{t=T_{n-1}^{+}}\\
 &  & +\int_{\Omega}\left(g_{h}^{(n)}-g_{H}^{(n)}\right)(w-u_{h})|_{t=T_{n-1}^{+}}\\
 &\leq& C\|\cfrac{\partial(u_{h}-w)}{\partial t}\|_{L^{2}((T_{n-1},T_{n});H^{-1}(\kappa))}
 \|u_{h}-u_{H}\|_{L^{2}((T_{n-1},T_{n});\kappa)}\\
 &  & +\|(u_{h}-u_{H})(\cdot,T_{n}^{-})\|_{L^{2}(\Omega)}\|(u_{h}-w)(\cdot,T_{n}^{-})\|_{L^{2}(\Omega)}\\
 &  & +\|u_{h}-w\|_{L^{2}((T_{n-1},T_{n});\kappa)}\|u_{h}-u_{H}\|_{L^{2}((T_{n-1},T_{n});\kappa)}\\
 &  & +\|g_{h}^{(n)}-g_{H}^{(n)}\|_{L^{2}(\Omega)}(\|(u_{h}-u_{H})(\cdot,T_{n-1}^{+})\|_{L^{2}(\Omega)}\\
 &  & +\|(u_{h}-w)(\cdot,T_{n-1}^{+})\|_{L^{2}(\Omega)}).
\end{eqnarray*}
Using Young's inequality, we have
\[
\|u_{h}-u_{H}\|_{V^{(n)}}^{2} \leq
\cfrac{1}{2}\|u_{h}-u_{H}\|_{V^{(n)}}^{2} + 2\left( C\|u_{h}-w\|_{W^{(n)}}^{2}+\|g_{h}^{(n)}-g_{H}^{(n)}\|_{L^{2}(\Omega)}^{2}\right).
\]
and
\begin{align*}
\|g_{h}^{(n)}-g_{H}^{(n)}\|_{L^{2}(\Omega)}^{2} & =\begin{cases}
0 & \text{ for }n=1\\
\|u_{h}^{(n-1)}(\cdot,T_{n-1}^{-})-u_{H}^{(n-1)}(\cdot,T_{n-1}^{-})\|_{L^{2}(\Omega)}^{2} & \text{ for }n>1
\end{cases}\\
 & \leq\begin{cases}
0 & \text{ for }n=1\\
\|u_{h}-u_{H}\|_{V^{(n-1)}}^{2} & \text{ for }n>1
\end{cases}.
\end{align*}
Therefore, we proved the lemma.

\end{proof}

\bibliographystyle{plain}
\bibliography{references}

\end{document}